\documentclass[smallcondensed]{svjour3}
\smartqed
\usepackage{amsmath,amssymb,graphicx,amsfonts,bm,algorithm,hyperref,xcolor}
\usepackage[noend]{algorithmic}

\journalname{Journal of Scientific Computing}

\begin{document}

\title{Constrained high-index saddle dynamics for the solution landscape with equality constraints \thanks{This work was supported by the National Natural Science Foundation of China (Grants No.~12050002), National Key R\(\&\)D Program of China 2021YFF1200500, and the Royal Society Newton Advanced Fellowship.}}
\titlerunning{Constrained high-index saddle dynamics}

\author{Jianyuan Yin \and Zhen Huang \and Lei Zhang}
\authorrunning{J. Yin, Z. Huang, L. Zhang}
\institute{J. Yin \at
              School of Mathematical Sciences, Peking University, Beijing 100871, China\\
              \email{yinjy@pku.edu.cn}
           \and
           Z. Huang \at
              School of Mathematical Sciences, Peking University, Beijing 100871, China\\
              \email{hertz@pku.edu.cn}
           \and
           L. Zhang \at
              Beijing International Center for Mathematical Research, Center for Quantitative Biology, Peking University, Beijing 100871, China\\
              \email{zhangl@math.pku.edu.cn}
}

\date{}
\maketitle
\begin{abstract}
  We propose a constrained high-index saddle dynamics (CHiSD) method to search for index-$k$ saddle points of an energy functional subject to equality constraints.
  With Riemannian manifold tools, the CHiSD is derived in a minimax framework, and its linear stability at an index-$k$ saddle point is proved.
  To ensure the manifold property, the CHiSD is numerically implemented using retractions and vector transport.
  Then we present a numerical approach by combining CHiSD with downward and upward search algorithms to construct the solution landscape in the presence of equality constraints.
  We apply the Thomson problem and the Bose--Einstein condensation as numerical examples to demonstrate the efficiency of the proposed method.
\keywords{Saddle point \and Energy landscape \and Solution landscape \and Manifold optimization \and Thomson problem \and Bose--Einstein condensation}
\subclass{37M05 \and 49K35 \and 37N30 \and 34K21 \and 65P99}
\end{abstract}

\section{Introduction}\label{sec:intro}
The energy landscape, which maps all possible configurations of a system to their corresponding energy \cite{wales2003energy}, has been successfully applied to various scientific issues, such as particle clusters \cite{mehta2016kinetic,meng2010free}, protein folding \cite{leeson2000protein,mallamace2016energy}, and soft matter \cite{cheng2010nucleation,han2019transition}.
The stationary point, at which the gradient vanishes on the energy landscape, plays an important role to determine physical or chemical properties of the system.
The stability of a stationary point is determined by its Hessian matrix.
For instance, a stationary point is a minimizer if all eigenvalues of its Hessian are positive.
The stationary points with both positive and negative eigenvalues are called saddle points, which can be further classified by the Morse index.
The Morse index is equal to the number of negative eigenvalues of the Hessian matrix at each stationary point.
In particular, the index-1 saddle point, which has one and only one negative eigenvalue, is referred to as the transition state connecting two minima on the energy landscape.

Searching for saddle points on a complicated energy landscape has attracted plenty of attention during the past decades \cite{zhang2008mathematical,weinan2010transition,zhang2016recent}.
Compared to finding stable minima, computation of saddle points is much more challenging due to their unstable nature.
Extensive numerical algorithms have been developed to compute index-1 saddle points, including path-finding methods \cite{henkelman2000improved,weinan2002string} and surface-walking methods \cite{olsen2004comparison}.
In particular, the surface-walking method starts from one initial state on the energy landscape and searches for index-1 saddle points based on local derivative information without a priori knowledge of the final state.
Examples of the surface-walking methods include the dimer-type methods \cite{henkelman1999dimer,zhang2012shrinking,zhang2016optimization}, the gentlest ascent dynamics \cite{weinan2011gentlest,quapp2014locating}, and the activation-relaxation technique \cite{cances2009some,machado2011optimized}.
Moreover, the surface-walking methods can be generalized to search for high-index (i.e. index greater than one) saddle points.
Quapp and Bofill developed a generalized gentlest ascent dynamics to locate high-index saddle points on the energy landscape with the calculation of the Hessian matrix \cite{quapp2014locating}.
The minimax method based on the local minimax theorem was proposed to find multiple high-index saddle points with a priori knowledge of low-index saddle points \cite{li2001minimax,li2019local1,li2019local2}.
We refer to some excellent reviews for more information \cite{weinan2010transition,zhang2016recent}.
Recently, Yin et al. proposed a high-index saddle dynamics (HiSD) and developed a high-index optimization-based shrinking dimer method for finding index-$k$ saddle points on the energy landscape \cite{yin2019high}.
Later, a generalized high-index saddle dynamics (GHiSD) was developed to compute any-index saddle points of dynamical systems \cite{yin2020searching}.

In many practical applications, the challenge of searching for saddle points is further increased by nonlinear equality constraints on the state variables.
For example, in the Oseen--Frank theory for nematic liquid crystals, the director $\mathbf{n}(\bm x)$ is a vector field subject to a unit-length constraint almost everywhere that describes the average orientation of liquid crystal molecules at the position $\bm x$ \cite{frank1958liquid,wang2021modeling}.
In the Kohn--Sham density functional theory, the electron orbitals $\Psi$ are supposed to satisfy the orthonormality constraint \cite{lin2019numerical}.
A simple approach to deal with nonlinear equality constraints is reparametrization via unconstrained variables, but this is often cumbersome and computationally inefficient.
Therefore, a number of numerical methods have been proposed to compute index-1 saddle points in the presence of equality constraints.
As examples of path-finding methods, the constrained string method \cite{du2009constrained} and the geodesic nudged elastic band method \cite{bessarab2015method} are able to find constrained minimum energy path and index-1 saddle points.
Alternatively, the constrained shrinking dimer dynamics \cite{zhang2012constrained} was developed to locate index-1 saddle points on a constrained energy function using the projected Hessian.
M{\"u}ller et al. identified multiple transitions of Skyrmions in magnetic systems using a surface-walking method, where each magnetic vector is restricted to a unit length \cite{muller2018duplication}.
The physical space, which is the direct product of $N$ spheres, is naturally embedded into the Euclidean space, and the theory of Riemannian manifolds is considered to derive the Hessian.
The unphysical degrees of freedom in the embedding space are removed to calculate the true eigenvectors required in this saddle-point searching method. However, how to numerically compute the constrained high-index (index$>1$) saddle points is still unclear at present.

In this article, we present a constrained high-index saddle dynamics (CHiSD) to search for high-index saddle points subject to equality constraints.
The CHiSD is a constrained version of the HiSD \cite{yin2019high} and derived with Riemannian gradients and Hessians. The retraction operator and vector transport are introduced to implement the numerical algorithm of CHiSD.
With the CHiSD algorithm, we are able to construct the solution landscape on a constrained manifold.
The solution landscape is a pathway map consisting of all stationary points and their connections, which not only provides an efficient approach to find multiple stationary points, including both minima and saddle points, without tuning initial guesses, but also shows the relationships between different stationary points \cite{yin2020construction}.

The rest of this article is organized as follows.
In Sect.~\ref{sec:chisd}, we first introduce the constrained saddle points and Riemannian manifold tools.
After briefly reviewing the HiSD for index-$k$ saddle points, the CHiSD is derived with Riemannian gradient and Hessian in a similar manner, and the linear stability of the index-$k$ saddle point is proved.
In Sect.~\ref{sec:numericalimplementation}, retractions and vector transport are applied to maintain the manifold property in numerical schemes, and then the CHiSD algorithm is presented.
Furthermore, the solution landscape in equality-constrained cases can be constructed with the downward and upward search algorithms based on the CHiSD algorithm.
In Sect.~\ref{sec:numericalexamples}, we show two numerical examples to demonstrate the efficiency of the CHiSD algorithm.
First, we construct the solution landscape of the Thomson problem to identify all possible stationary points in the cases of the particle number $N=5, 7$ and $9$.
Second, as a nonlinear elliptic eigenvalue problem, the excited states of the Bose--Einstein condensates (BEC) are calculated with a combination of the CHiSD and the upward search algorithm.
Some conclusions and discussions are presented in Sect.~\ref{sec:conclusion}.

\section{Constrained high-index saddle dynamics}
\label{sec:chisd}
\subsection{Constrained saddle points}
\label{sec:saddle}
Given a twice Fr\'{e}chet differentiable energy functional $E(\bm{x})$ defined on a $d$-dimensional real Hilbert space $\mathcal{H}$ with an inner product $\left\langle\cdot,\cdot\right\rangle$ and the norm $\|\cdot\|$, we let $\nabla E(\cdot): \mathcal{H}\rightarrow\mathcal{H}$ denotes the Riesz map applied to the Fr\'{e}chet derivative, and $\nabla^2 E(\bm x)\in\mathcal{L}(\mathcal{H})$ denotes the Hessian.
We consider the functional $E(\bm x)$ for $\bm x\in \mathcal{H}$ subject to $m$ equality constraints,
\begin{equation}\label{eqn:constraints}
  \bm c(\bm x)=\left(c_1(\bm x), \cdots, c_m(\bm x)\right)=(0,\cdots,0).
\end{equation}
where each $c_p:\mathcal{H}\rightarrow\mathbb{R}$ is a smooth function.
Let $\mathbf{A}(\bm x)$ denote $\left(\nabla c_1(\bm x), \cdots, \nabla c_m(\bm x)\right)$, and it is always assumed that $\nabla c_1(\bm x)$, $\cdots$, $\nabla c_m(\bm x)$ are linearly independent for each $\bm{x}$ subject to \eqref{eqn:constraints}, which is often referred to as the linear independence constraint qualification (LICQ) in optimization theory \cite{nocedal2006numerical}.
From the regular level set theorem \cite{loring2011introduction}, the feasible set consisting of all feasible points,
\begin{equation}\label{eqn:manifold}
  \mathcal{M}=\{\bm x\in \mathcal{H}: \bm c(\bm x)=(0,\cdots,0)\},
\end{equation}
is a $(d-m)$-dimensional smooth Riemannian manifold with induced metric.
For $\bm x\in \mathcal{H}$, the normal space of the isosurface of $\bm c(\bm x)$ is defined as $N(\bm x)= \operatorname{span}\left\{\mathbf{A}(\bm x)\right\}$, and the tangent space is defined as its orthogonal complement $T(\bm x)={N}(\bm x)^\perp=\{\bm v\in\mathcal{H}:\mathbf{A}(\bm x)^\top\bm v=\bm 0\}$.
For $\mathbf{A}=(\bm a_1, \cdots, \bm a_p)$ and $\mathbf{B}=(\bm b_1, \cdots, \bm b_q)$ with columns in $\mathcal{H}$, $\mathbf{A}^\top \mathbf{B}$ denotes a $p\times q$ matrix whose $(i,j)$-entry is $\langle\bm b_j, \bm a_i\rangle$, and $\mathbf{A}^\top$ denotes the corresponding linear operator for simplicity.
The orthogonal projection operators $\mathbf{P}$ on these spaces have the forms of,
\begin{equation}\label{eqn:projection}
  \mathbf{P}_{N(\bm x)}=\mathbf{A}(\bm x)\left(\mathbf{A}(\bm x)^\top \mathbf{A}(\bm x)\right)^{-1}\mathbf{A}(\bm x)^\top,
  \qquad
  \mathbf{P}_{T(\bm x)}=\mathbf{I}-\mathbf{P}_{N(\bm x)},
\end{equation}
where $\mathbf{A}^\top \mathbf{A}$ is positive definite for each $\bm x\in \mathcal{M}$ from LICQ, and remains positive definite in a neighbourhood $U(\bm x)\subset \mathcal{H}$.

Since the functional $E$ is constrained on the Riemannian manifold $\mathcal{M}$, the Riemannian gradient and Hessian should be considered in replacement of $\nabla E(\bm x)$ and $\nabla^2 E(\bm x)$ in unconstrained cases.
For $\bm x\in\mathcal{M}$, the Riemannian gradient is defined as,
\begin{equation}\label{eqn:riemanniangrad}
  \operatorname{grad}E(\bm x)=\mathbf{P}_{T(\bm x)}\nabla E(\bm x),
\end{equation}
and the Riemannian Hessian $\operatorname{Hess}E(\bm x): \bm \eta\in T(\bm x) \rightarrow T(\bm x)\subset \mathcal{H}$ is defined as,
\begin{equation}\label{eqn:riemannianhess}
\begin{aligned}
  \operatorname{Hess}E&(\bm x)[\bm \eta]=\mathbf{P}_{T(\bm x)} \left(\partial_{\bm \eta} \operatorname{grad}E(\bm x)\right)\\
  &=\mathbf{P}_{T(\bm x)} \left(\nabla^2 E(\bm x) \bm \eta
  -\nabla^2 \bm c(\bm x) \bm \eta
  \left(\mathbf{A}(\bm x)^\top \mathbf{A}(\bm x)\right)^{-1}\mathbf{A}(\bm x)^\top \nabla E(\bm x)\right),
\end{aligned}
\end{equation}
where $\nabla^2 \bm c(\bm x) \bm \eta=\left(\nabla^2 c_1(\bm x) \bm \eta, \cdots, \nabla^2 c_m(\bm x) \bm \eta\right)$ \cite{absil2008optimization}.
The Riemannian Hessian \eqref{eqn:riemannianhess} is a symmetric operator on $T(\bm x)$, and can be extended as a symmetric operator on $\mathcal{H}$,
\begin{equation}\label{eqn:riemannianhessextend}
  \operatorname{\widetilde Hess}E(\bm x)[\bm \eta]:=\operatorname{Hess}E(\bm x)[\mathbf{P}_{T(\bm x)}\bm \eta],
\end{equation}
to avoid the problem of definition domains.
The Riemannian gradient \eqref{eqn:riemanniangrad} and Hessian \eqref{eqn:riemannianhess} can be naturally extended to a neighbourhood $U(\bm x)\subset \mathcal{H}$ of each $\bm x \in \mathcal{M}$ with the same expressions, and can be thought of as the Riemannian gradient and Hessian on some isosurface of $\bm c(\bm x)$.

A point $\hat{\bm x}\in\mathcal{M}$ is called a \emph{stationary point} (or \emph{critical point}) of the functional $E$ subject to equality constraints \eqref{eqn:constraints} if $\operatorname{grad}E(\hat{\bm x})=\bm 0$, and a stationary point $\hat{\bm x}$ is said to be \emph{nondegenerate} if $\operatorname{Hess}E(\hat{\bm x})$ has a bounded inverse on $T(\hat{\bm x})$.
A stationary point $\hat{\bm x}\in\mathcal{M}$ is a \emph{(constrained) saddle point}, if $\hat{\bm x}$ is not a local extremum on $\mathcal{M}$.
According to the Morse theory, the \emph{(Morse) index} of a stationary point $\hat{\bm x}$ is defined as the maximal dimension of a subspace $\mathcal{K}\subseteq T(\hat{\bm x})$ on which $\operatorname{Hess}E(\hat{\bm x})$ is negative definite \cite{milnor1963morse}.
For an index-$k$ saddle point (a $k$-saddle) $\hat{\bm x}$, the Riemannian Hessian $\operatorname{Hess}E(\hat{\bm x})$ has $(d-m)$ eigenvalues $\hat{\lambda}_1 \leqslant \cdots \hat{\lambda}_k <0\leqslant \hat{\lambda}_{k+1}\leqslant \cdots \leqslant \hat{\lambda}_{d-m}$ with corresponding orthonormal eigenvectors $\hat{\bm v}_1,\cdots,\hat{\bm v}_{d-m}$.
A nondegenerate index-$k$ saddle point $\hat{\bm x}$ is a local maximum on a $k$-dimensional submanifold $\mathcal{M}_-(\hat{\bm x})$ of $\mathcal{M}$, and a local minimum on a $(d-m-k)$-dimensional submanifold $\mathcal{M}_+(\hat{\bm x})$, and the tangent spaces at $\hat{\bm x}$ of the two submanifolds $\mathcal{M}_-(\hat{\bm x})$ and $\mathcal{M}_+(\hat{\bm x})$ are respectively $\operatorname{span}\{\hat{\bm v}_1,\cdots,\hat{\bm v}_k\}$ and $\operatorname{span}\{\hat{\bm v}_{k+1}, \cdots,\hat{\bm v}_{d-m}\}$.
This minimax structure inspires us to develop numerical methods for searching for saddle points with a certain index on the manifold.

Another frequently-used approach to studying stationary points in a constrained problem is the Lagrangian function.
The Lagrangian function $L_E(\bm x, \bm \xi)$ of the energy $E$ with the equality constraints \eqref{eqn:constraints} is,
\begin{equation}\label{eqn:lagrangianfunction}
L_E(\bm x, \bm \xi)=E(\bm x)-\bm c(\bm x)\bm \xi,
\end{equation}
where $\bm\xi\in \mathbb{R}^m$ is the Lagrangian multiplier.
A stationary point $\hat{\bm x}\in\mathcal{M}$ can be equivalently defined as where the first-order Karush--Kuhn--Tucker (KKT) condition,
\begin{equation}\label{eqn:kkt}
\nabla_{\bm x} L_{E}(\hat{\bm x}, \hat{\bm \xi})=\nabla E(\hat{\bm x})-\mathbf{A}(\hat{\bm x})\hat{\bm \xi}=\bm 0,
\end{equation}
holds for some multiplier $\hat{\bm \xi}\in \mathbb{R}^m$ \cite{nocedal2006numerical}.
From the KKT condition \eqref{eqn:kkt} and LICQ, the multiplier can be calculated as $\hat{\bm\xi}=\left(\mathbf{A}(\hat{\bm x})^\top \mathbf{A}(\hat{\bm x})\right)^{-1} \mathbf{A}(\hat{\bm x})^\top \nabla E(\hat{\bm x})$, and consequently, the Lagrangian Hessian at a stationary point $\hat{\bm x}$ is
\begin{equation}\label{eqn:lagrangianhess}
\nabla_{\bm x \bm x}^2 L_{E}(\hat{\bm x}, \hat{\bm \xi})
=\nabla^2 E(\hat{\bm x})-\nabla^2 \bm c(\hat{\bm x})\left(\mathbf{A}(\hat{\bm x})^\top \mathbf{A}(\hat{\bm x})\right)^{-1}
  \mathbf{A}(\hat{\bm x})^\top \nabla E(\hat{\bm x}).
\end{equation}
Since only the vectors in the tangent space $T(\hat{\bm x})$ are physical directions, a projected Hessian $\mathbf{P}_{T(\hat{\bm x})} \nabla_{\bm x\bm x}^2 L_{E}(\hat{\bm x}, \hat{\bm \xi}) \mathbf{P}_{T(\hat{\bm x})}$ is considered to determine the second-order properties of the stationary point, which accords with \eqref{eqn:riemannianhess} in the tangent space.

We take the unit sphere constraint $c(\bm x)=(\bm x^{\top}\bm x-1)/2$ in the Euclidean space $\mathbb{R}^d$ as a simple example, where the feasible set is customarily denoted as $S^{d-1}=\left\{\bm{x}\in\mathbb{R}^d: \|\bm x\|_2^2=1\right\}$.
For $\bm x\neq\bm 0$ and $\bm \eta \in T(\bm x)=\operatorname{span}\{\bm x\}^\perp$, we have,
\begin{equation}\label{eqn:sphere}
\begin{aligned}
&\mathbf{P}_{N(\bm x)}=\bm x(\bm x^{\top}\bm x)^{-1}\bm x^{\top},\quad \operatorname{grad}E(\bm x)=\left(\mathbf{I}-\bm x(\bm x^{\top}\bm x)^{-1}\bm x^{\top}\right)\nabla E(\bm x),\\
&\operatorname{Hess}E(\bm x)[\bm \eta]=\left(\mathbf{I}-\bm x(\bm x^{\top}\bm x)^{-1}\bm x^{\top}\right)\nabla^2 E(\bm x) \bm \eta
  -(\bm x^{\top}\bm x)^{-1}\bm x^\top \nabla E(\bm x)\bm \eta.
\end{aligned}
\end{equation}

\subsection{Review of the HiSD method}
\label{sec:hisd}
The HiSD method aims to search for a high-index saddle point in an unconstrained case \cite{yin2019high}.
The HiSD for a $k$-saddle is given by
\begin{equation}\label{eqn:hisd}
\left\{
\begin{aligned}
\dot{\bm{x}}  &=-\left(\mathbf{I}-\sum\limits_{i=1}^{k} 2\bm{v}_{i} \bm{v}_{i}^{\top}\right) \nabla {E}(\bm{x}),\\
\dot{\bm{v}_i}&=-\left(\mathbf{I}-\bm{v}_i\bm{v}_i^\top- \sum\limits_{j=1}^{i-1} 2\bm{v}_j\bm{v}_j^\top\right) \nabla^2 E(\bm{x})\bm{v}_i, \quad i=1, \cdots, k,
\end{aligned}
\right.
\end{equation}
which involves a position variable $\bm x$ and $k$ directional variables $\bm v_i$ with an initial condition,
\begin{equation}\label{eqn:hisdinitial}
\bm x=\bm x^{(0)}\in \mathcal{H}, \quad \bm v_i = \bm v_i^{(0)}\in \mathcal{H}, \quad
\mathrm{s.t.}\left\langle\bm v_j^{(0)},\bm v_i^{(0)}\right\rangle=\delta_{ij}, \quad i,j=1,\cdots,k.
\end{equation}
With straightforward calculations, for the HiSD \eqref{eqn:hisd} with the initial condition \eqref{eqn:hisdinitial}, the vectors $\bm v_1,\cdots,\bm v_k$ always satisfy the orthonormal condition $\left\langle\bm v_j,\bm v_i\right\rangle=\delta_{ij}$.
The dynamics for $\bm x$ is actually a transformed gradient flow,
\begin{equation}\label{eqn:hisdx}
\dot{\bm{x}} = \mathbf{P}_{\mathcal{V}}\nabla E(\bm x)-(\mathbf{I}-\mathbf{P}_{\mathcal{V}})\nabla E(\bm x),
\end{equation}
where $\mathbf{P}_{\mathcal{V}}\nabla E(\bm x)$ is the gradient ascent direction on $\mathcal{V}=\operatorname{span}\{\bm v_1, \cdots,\bm v_k\}$ and $(\mathbf{I}-\mathbf{P}_{\mathcal{V}})\nabla E(\bm x)$ is the gradient descent direction on $\mathcal{V}^\perp$.
Since a nondegenerate $k$-saddle is a local maximum along $k$ orthogonal directions and a local minimum along other orthogonal directions, this dynamics can find a $k$-saddle with proper $\{\bm v_i: i=1,\cdots,k\}$.
The dynamics for $\bm v_i$ renews the subspace $\mathcal{V}$ in \eqref{eqn:hisdx} by finding the eigenvectors corresponding to the smallest $k$ eigenvalues of the Hessian $\nabla^2 E(\bm x)$ at the current position $\bm x$.
The eigenvector $\bm v_i$ corresponding to the $i$-th smallest eigenvalue of the Hessian $\nabla^2 E(\bm x)$ can be obtained by solving a constrained optimization problem of the Rayleigh quotient,
\begin{equation}\label{eqn:hisdrayleigh}
\min_{\bm v_i \in\mathcal{H}} \quad \left\langle\nabla^2 E(\bm x) \bm v_i, \bm v_i\right\rangle \quad \mathrm{s.t.}\left\langle\bm v_j,\bm v_i\right\rangle=\delta_{ij},\quad j=1,\cdots,i,
\end{equation}
with the knowledge of $\bm v_1, \cdots,\bm v_{i-1}$,
and the $\bm v_i$ dynamics in \eqref{eqn:hisd} solves the constrained optimization problem \eqref{eqn:hisdrayleigh} using gradient flow.
Then the subspace $\mathcal{V}$ in \eqref{eqn:hisdx} is spanned by the vectors $\{\bm v_i: i=1,\cdots,k\}$.
In practice, the Hessian in \eqref{eqn:hisd} is often approximated by dimers with a length of $2l$ \cite{henkelman1999dimer},
\begin{equation}\label{eqn:dimer}
\nabla^2 E(\bm{x})\bm{v}_i \approx \left(\nabla E(\bm{x}+l\bm{v}_i)-\nabla E(\bm{x}-l\bm{v}_i)\right)/2l,
\end{equation}
and the HiSD with dimer approximations and dimer shrinkage $\dot{l}=-l$ is referred to as the high-index optimization-based shrinking dimer method \cite{yin2019high}.
In numerical implementation, the HiSD can be simply discretized with an explicit Euler scheme.

\subsection{Formulation of CHiSD}
\label{sec:chisddyn}
In order to search for a nondegenerate $k$-saddle $\hat{\bm x}$ on the manifold $\mathcal{M}$, the HiSD method \eqref{eqn:hisd} is supposed to be generalized to the equality-constrained case \eqref{eqn:constraints}.
With the minimax structure of saddle points on manifolds, we set the dynamics for a $k$-saddle $\hat{\bm x}$ as a transformed gradient flow similarly,
\begin{equation}\label{eqn:chisdx}
\dot{\bm x}=-\left(\mathbf{I}-2\mathbf{P}_{\mathcal{V}}\right) \operatorname{grad} E(\bm{x}),
\end{equation}
which is gradient ascent on the subspace $\mathcal{V}$ and gradient descent on its orthogonal complement $\mathcal{V}^\perp$.
The $k$-dimensional subspace $\mathcal{V}\subseteq T(\bm x)$ is spanned by $k$ orthonormal directions $\bm{v}_1$, $\cdots$, $\bm v_k$, and the orthogonal projection operator $\mathbf{P}_{\mathcal{V}}$ possesses a simple form of $\sum_{i=1}^{k} \bm{v}_{i} \bm{v}_{i}^{\top}$.
At a $k$-saddle $\hat{\bm x}$, the $k$ directions $\bm{v}_1$, $\cdots$, $\bm v_k\in T(\hat{\bm x})$ should be the eigenvectors of the smallest $k$ eigenvalues of the Riemannian Hessian $\operatorname{Hess}E(\hat{\bm x})$ \eqref{eqn:riemannianhess}.
Therefore, the direction $\bm v_i$ at the current position $\bm x$ is approximated by the $i$-th eigenvector of $\operatorname{Hess}E(\bm x)$, which can be obtained by a constrained optimization problem with the knowledge of $\bm v_1, \cdots, \bm v_{i-1}$,
\begin{equation}\label{eqn:chisdoptvi}
\min\limits_{\bm v_i\in T(\bm x)} \Big\langle\operatorname{Hess}E(\bm x)[\bm{v}_{i}], \bm{v}_{i}\Big\rangle,
\quad\mathrm{s.t.}\quad\left\langle\bm{v}_j, \bm{v}_i\right\rangle=\delta_{ij},\quad j=1, \cdots, i.
\end{equation}
Equivalently, we deal with another constrained optimization problem,
\begin{equation}\label{eqn:chisdoptviwidetilde}
\begin{aligned}
&\min\limits_{\bm v_i\in \mathcal{H}} \left\langle\operatorname{\widetilde Hess}E(\bm x)[\bm{v}_{i}], \bm{v}_{i}\right\rangle,\\
&\mathrm{s.t.}\quad\mathbf{A}(\bm x)^\top\bm v_i=\bm 0, \quad \left\langle\bm{v}_j, \bm{v}_i\right\rangle=\delta_{ij},\quad j=1, \cdots, i,
\end{aligned}
\end{equation}
with the operator $\operatorname{\widetilde Hess}E(\bm x): \mathcal{H}\rightarrow T(\bm x)\subset \mathcal{H}$ in \eqref{eqn:riemannianhessextend}.
The Lagrangian function of \eqref{eqn:chisdoptviwidetilde} is,
\begin{equation}\label{eqn:Lagarangianvi}
L_i(\bm v_i,\bm \xi_i,\bm \mu_i)=\left\langle\operatorname{\widetilde Hess}E(\bm x)[\bm{v}_{i}], \bm{v}_{i}\right\rangle
-\sum_{j=1}^{i} \xi_{ij}\left(\left\langle\bm{v}_{j}, \bm{v}_{i}\right\rangle-\delta_{ij}\right)
-\bm v_i^\top\mathbf{A}(\bm x)\bm \mu_i,
\end{equation}
where $\bm \xi_i\in \mathbb{R}^i$ and $\bm \mu_i\in\mathbb{R}^m$ are Lagrangian multipliers, and the dynamics of $\bm v_i$ is,
\begin{equation}\label{eqn:chisdvi}
\dot{\bm v}_{i}=-\dfrac12\nabla_{\bm{v}_{i}}L_i(\bm v_i,\bm \xi,\bm \mu)
=-\operatorname{\widetilde Hess}E(\bm x)[\bm{v}_{i}]
+\xi_{i}\bm{v}_{i}+\sum_{j=1}^{i-1} \dfrac{\xi_{ij}}{2} \bm{v}_{j}
+\dfrac12\mathbf{A}(\bm x)\bm \mu_i.
\end{equation}
The dynamics \eqref{eqn:chisdx} and \eqref{eqn:chisdvi} should maintain the manifold property,
\begin{equation}\label{eqn:manifoldproperty}
  \bm x \in \mathcal{M}, \quad \bm{v}_1, \cdots, \bm v_k \in T(\bm x),
\end{equation}
as well as the orthonormal condition,
\begin{equation}\label{eqn:orthonormal}
  \quad\left\langle\bm{v}_j, \bm{v}_i\right\rangle=\delta_{ij},\quad i,j=1, \cdots, k.
\end{equation}
Therefore, the Lagrangian multipliers $\bm \xi_i,\bm \mu_i$ should satisfy,
\begin{equation}\label{eqn:dtconstraints}
\begin{aligned}
  &\dfrac{\mathrm{d}}{\mathrm{d}t}(\bm A(\bm x)^\top\bm v_i)
  =\bm A(\bm x)^\top\dot{\bm v}_i +
   \left(\nabla^2 \bm c(\bm x)\dot{\bm x}\right)^\top \bm v_i= \bm0, \\
  &\dfrac{\mathrm{d}}{\mathrm{d}t}\left\langle \bm v_j, \bm v_i\right\rangle =
  \left\langle \dot{\bm v}_j, \bm v_i\right\rangle+\left\langle \bm v_j, \dot{\bm v}_i\right\rangle=0,
\end{aligned}
\end{equation}
and are obtained as,
\begin{equation}\label{eqn:multipliers}
\begin{aligned}
  &\xi_{ii}=\left\langle\operatorname{\widetilde Hess}E(\bm x)[\bm{v}_{i}], \bm{v}_{i}\right\rangle, \quad
  \xi_{ij}=4\left\langle\operatorname{\widetilde Hess}E(\bm x)[\bm{v}_{i}], \bm{v}_{j}\right\rangle, \\
  &\bm \mu_i=-2\left(\mathbf{A}(\bm x)^\top \mathbf{A}(\bm x)\right)^{-1}
  \left(\nabla^2 \bm c(\bm x) \dot{\bm x}\right)^\top \bm v_{i}.
\end{aligned}
\end{equation}
As a result, we obtain the CHiSD for a $k$-saddle ($k$-CHiSD) as,
\begin{equation}\label{eqn:chisd}
\left\{
\begin{aligned}
\dot{\bm{x}}  =&-\left(\mathbf{I}-\sum\limits_{i=1}^{k} 2\bm{v}_{i} \bm{v}_{i}^{\top}\right) \operatorname{grad} E(\bm{x}),\\
\dot{\bm{v}}_i=&-\left(\mathbf{I}-\bm{v}_i\bm{v}_i^\top- \sum\limits_{j=1}^{i-1} 2\bm{v}_j\bm{v}_j^\top\right) \operatorname{\widetilde Hess}E(\bm x)[\bm{v}_{i}] \\
&-\mathbf{A}(\bm x)\left(\mathbf{A}(\bm x)^\top \mathbf{A}(\bm x)\right)^{-1}
  \left(\nabla^2 \bm c(\bm x) \dot{\bm x}\right)^\top \bm v_{i}, \quad i=1, \cdots, k,
\end{aligned}
\right.
\end{equation}
with an initial condition at $t=0$,
\begin{equation}\label{eqn:chisdinitial}
  \bm x=\bm x^{(0)}\in \mathcal{M}, \quad
  \bm v_i=\bm v_i^{(0)} \in T(\bm x^{(0)}), \quad
  \left\langle\bm v_j^{(0)},\bm v_i^{(0)}\right\rangle=\delta_{ij}, \quad i,j=1,\cdots,k.
\end{equation}

\begin{remark}\label{rmk:consercation}
With straightforward calculations, it can be verified that the CHiSD \eqref{eqn:chisd} with the initial condition \eqref{eqn:chisdinitial} satisfies \eqref{eqn:manifoldproperty} and \eqref{eqn:orthonormal} for $t>0$.
\end{remark}
\begin{remark}\label{rmk:parallel}
The second term in the dynamics of $\bm v_i$ in \eqref{eqn:chisd}, which is derived from the constraints $\bm v_i\in T(\bm x)$, corresponds to the parallel translation induced by the Riemannian connection.
For a smooth curve $\bm x(t)$ on the manifold $\mathcal{M}$, the tangent space $T(\bm x)$ changes accordingly, and the dynamics
\begin{equation}\label{eqn:paralleltranport}
  \dot{\bm{v}}=-\mathbf{A}(\bm x)\left(\mathbf{A}(\bm x)^\top \mathbf{A}(\bm x)\right)^{-1}
  \left(\nabla^2 \bm c(\bm x) \dot{\bm x}\right)^\top \bm v,
\end{equation}
transports the tangent vector $\bm v(0)\in T(\bm x(0))$ to $\bm v(t)\in T(\bm x(t))$ along the curve $\bm x(t)$ on the manifold $\mathcal{M}$ in a parallel way, i.e.
$\frac{\mathrm{D}}{\mathrm{d}t}\bm v(t):=\mathbf{P}_{T(\bm x(t))}\frac{\mathrm{d}}{\mathrm{d}t}\bm v(t)\equiv\bm 0$.
\end{remark}

For the unit sphere $S^{d-1}$, the CHiSD can be simplified as,
\begin{equation}\label{eqn:chisdspheresimplified}
\left\{
\begin{aligned}
\dot{\bm{x}}  =&-\left(\mathbf{I}-\bm x \bm x^\top-\sum\limits_{i=1}^{k} 2\bm{v}_{i} \bm{v}_{i}^{\top}\right) \nabla E(\bm{x}),\\
\dot{\bm{v}}_i=&-\left(\mathbf{I}-\bm x \bm x^\top-\bm{v}_i\bm{v}_i^\top- \sum\limits_{j=1}^{i-1} 2\bm{v}_j\bm{v}_j^\top\right)
\nabla^2 E(\bm x)\bm{v}_{i} -\bm x \bm v_i^\top \nabla E(\bm x), \quad i=1, \cdots, k.
\end{aligned}
\right.
\end{equation}
The sphere constraint is a particularly special case because the Hessian of the constraint $\nabla^2 c(\bm x)$ is a scalar multiple of the identity.
Therefore, $(\mathbf{I}-\bm x \bm x^\top)\nabla^2 E(\bm x)$ and the Riemannian Hessian $\operatorname{Hess}(\bm x)$ share the same eigenvectors in $T(\bm x)$ with a translation in eigenvalues, and the Riemannian Hessian in \eqref{eqn:chisdoptvi} can be replaced by $\nabla^2 E(\bm x)$, which is not valid for general constraints.

\subsection{Linear stability}
Although CHiSD \eqref{eqn:chisd} with the initial condition \eqref{eqn:chisdinitial} satisfies the property \eqref{eqn:manifoldproperty} and \eqref{eqn:orthonormal}, a small perturbation may easily deviate $\bm x$ away from the manifold $\mathcal{M}$.
To achieve better stability, a modified term with $\mu>0$ is attached to the dynamics of $\bm x$ to reinforce the equality constraints \eqref{eqn:constraints}, leading to a modified CHiSD for a $k$-saddle,
\begin{equation}\label{eqn:chisdm}
\left\{
\begin{aligned}
\dot{\bm{x}}  =&-\left(\mathbf{I}-\sum\limits_{i=1}^{k} 2\bm{v}_{i} \bm{v}_{i}^{\top}\right) \operatorname{grad} E(\bm{x})
-\mu \sum_{l=1}^m c_l(\bm x)\nabla c_l(\bm x),\\
\dot{\bm{v}}_i=&-\left(\mathbf{I}-\bm{v}_i\bm{v}_i^\top- \sum\limits_{j=1}^{i-1} 2\bm{v}_j\bm{v}_j^\top\right) \operatorname{\widetilde Hess}E(\bm x)[\bm{v}_{i}] \\
&-\mathbf{A}(\bm x)\left(\mathbf{A}(\bm x)^\top \mathbf{A}(\bm x)\right)^{-1}
  \left(\nabla^2 \bm c(\bm x) \dot{\bm x}\right)^\top \bm v_{i}, \quad i=1, \cdots, k.
\end{aligned}
\right.
\end{equation}
The following theorem shows the linear stability of $k$-saddles in \eqref{eqn:chisdm}.

\begin{theorem}\label{thm}
Assume that $E(\bm x)$ is a $\mathcal{C}^3$ functional, $\bm x^\ast \in \mathcal{M}$, $\left\{\bm v_i^\ast\right\}_{i=1}^k \subset T(\bm x^*)$ satisfies $\|\bm v_i^\ast\|=1$, $\mu>0$, and $\operatorname{Hess}E(\bm x^\ast)$ is nondegenerate, whose eigenvalues are $\lambda^\ast_1<\cdots<\lambda^\ast_k\leqslant\lambda^\ast_{k+1}\leqslant\cdots\leqslant\lambda^\ast_{d-m}$. Then $(\bm x^\ast,\bm v_1^\ast,\cdots,\bm v_k^\ast)$ is a linearly stable steady state of \eqref{eqn:chisdm}, if and only if $\bm x^{\ast}$ is a $k$-saddle and $\operatorname{Hess}E(\bm x^\ast)[\bm v^\ast_i]=\lambda^\ast_i\bm v^\ast_i$ for $i=1,\cdots,k$.
\end{theorem}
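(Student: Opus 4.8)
The plan is to carry out a linear stability analysis of the candidate steady state $\bm y^\ast=(\bm x^\ast,\bm v_1^\ast,\dots,\bm v_k^\ast)$: I would linearize the modified dynamics \cref{eqn:chisdm} around $\bm y^\ast$ and show that the spectrum of the resulting Jacobian lies in the open left half-plane exactly under the stated conditions, adapting the argument for the unconstrained HiSD in \cite{yin2019high} to the Riemannian setting. First I would verify that the stated conditions characterize $\bm y^\ast$ as a steady state. Since $\bm x^\ast\in\mathcal{M}$, every $c_l(\bm x^\ast)=0$ and the $\mu$-term vanishes; because the reflection $\mathbf{I}-2\mathbf{P}_{\mathcal{V}^\ast}$ is invertible (writing $\mathcal{V}^\ast=\operatorname{span}\{\bm v_1^\ast,\dots,\bm v_k^\ast\}$), the condition $\dot{\bm x}=\bm 0$ is equivalent to $\operatorname{grad}E(\bm x^\ast)=\bm 0$, i.e. $\bm x^\ast$ is a stationary point. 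With $\dot{\bm x}=\bm 0$ the parallel-transport term in the $\bm v_i$-equation drops out, so $\dot{\bm v}_i=\bm 0$ together with the multiplier formulas \cref{eqn:multipliers} reduces exactly to the eigenrelation $\operatorname{Hess}E(\bm x^\ast)[\bm v_i^\ast]=\lambda_i^\ast\bm v_i^\ast$.

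Next I would compute the Jacobian of the right-hand side at $\bm y^\ast$. The decisive simplification is that at $\bm y^\ast$ every term produced by differentiating the projections $\mathbf{P}_{\mathcal{V}}$, $\mathbf{P}_{T}$, or the operator $\mathbf{A}(\bm x)$ is multiplied by either $\operatorname{grad}E(\bm x^\ast)=\bm 0$ or the vanishing $\dot{\bm x}|_{\bm y^\ast}=\bm 0$, so all such terms disappear. In particular the $\delta\bm x$-equation depends only on $\delta\bm x$ (its $\bm v_i$-derivatives all hit $\operatorname{grad}E(\bm x^\ast)=\bm 0$), while each $\delta\bm v_i$-equation depends only on $\delta\bm x$ and $\delta\bm v_1,\dots,\delta\bm v_i$. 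Hence the Jacobian is block lower-triangular, and its spectrum is the union of the spectra of the $\bm x$-block and of the $k$ diagonal $\bm v_i$-blocks, so it suffices to analyze these blocks separately.

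For the $\bm x$-block I would split $\mathcal{H}=T(\bm x^\ast)\oplus N(\bm x^\ast)$. On the tangent part the block reduces to $-(\mathbf{I}-2\mathbf{P}_{\mathcal{V}^\ast})\operatorname{Hess}E(\bm x^\ast)$, whose stability is equivalent to positive definiteness of $(\mathbf{I}-2\mathbf{P}_{\mathcal{V}^\ast})\operatorname{Hess}E(\bm x^\ast)$ on $T(\bm x^\ast)$; since $\mathcal{V}^\ast$ is spanned by eigenvectors, this reflected operator has eigenvalues $-\lambda_1^\ast,\dots,-\lambda_k^\ast,\lambda_{k+1}^\ast,\dots,\lambda_{d-m}^\ast$, so positivity holds precisely when $\lambda_k^\ast<0\leqslant\lambda_{k+1}^\ast$ with the negative eigenspace equal to $\mathcal{V}^\ast$ — that is, precisely when $\bm x^\ast$ is a $k$-saddle and $\{\bm v_i^\ast\}$ spans its negative eigenspace. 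The linearized $\mu$-term equals $-\mu\mathbf{A}\mathbf{A}^\top$, which annihilates $T(\bm x^\ast)$ and is negative definite on $N(\bm x^\ast)$, and is what damps the normal (off-manifold) perturbations. For each $\bm v_i$-block the linearized dynamics is that of the projected Rayleigh-quotient gradient flow, whose stability forces $\bm v_i^\ast$ to minimize $\langle\operatorname{Hess}E(\bm x^\ast)[\bm v],\bm v\rangle$ over the unit sphere of $T(\bm x^\ast)\cap\operatorname{span}\{\bm v_1^\ast,\dots,\bm v_{i-1}^\ast\}^\perp$, i.e. to be the eigenvector of the $i$-th smallest eigenvalue; the strict ordering $\lambda_1^\ast<\cdots<\lambda_k^\ast$ makes these eigenvectors, and hence the block stability, unambiguous. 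Reading each equivalence in both directions then yields the asserted ``if and only if''.

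The hard part, I expect, is the $\bm x$-block on the full space $\mathcal{H}=T(\bm x^\ast)\oplus N(\bm x^\ast)$: because $\nabla E(\bm x^\ast)=\mathbf{A}(\bm x^\ast)\hat{\bm\xi}\neq\bm 0$ lies in the normal space, the full derivative of $\operatorname{grad}E$ does not leave $T(\bm x^\ast)$ invariant, producing genuine tangent--normal coupling within this block. The main technical work is to show that the $\mu$-term renders the normal block negative definite and that this coupling cannot move any eigenvalue into the right half-plane, so that the tangential spectrum — governed by the reflected Riemannian Hessian — alone decides stability. Carefully organizing these projection derivatives is the crux of the estimate; here \cref{eqn:riemannianhess} is the key tool, since it already encodes exactly the curvature correction that arises when $\operatorname{grad}E$ is differentiated at a point where $\nabla E(\bm x^\ast)$ has a nonzero normal component.
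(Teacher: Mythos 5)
Your overall strategy coincides with the paper's: linearize \cref{eqn:chisdm} at the steady state, observe that $\operatorname{grad}E(\bm x^\ast)=\bm 0$ makes the Jacobian block lower-triangular, read off the spectrum of the $\bm x$-block via the splitting $\mathcal{H}=T(\bm x^\ast)\oplus N(\bm x^\ast)$ (reflected Riemannian Hessian on the tangent part, $\mu$-damping on the normal part), and identify each $\bm v_i$-block with a Rayleigh-quotient flow whose stability pins $\bm v_i^\ast$ to the $i$-th smallest eigenvector. Two points deserve attention. First, the ``hard part'' you anticipate in your last paragraph does not exist: differentiating the identity $\mathbf{A}(\bm x)^{\top}\operatorname{grad}E(\bm x)\equiv\bm 0$ and using $\operatorname{grad}E(\bm x^\ast)=\bm 0$ shows that the derivative of $\operatorname{grad}E$ at a stationary point maps \emph{all} of $\mathcal{H}$ into $T(\bm x^\ast)$ (this is why the paper's $\mathbf{H}(\bm x)$ in \cref{eqn:chisdhmatrix} carries a leading $\mathbf{P}_{T(\bm x)}$). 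Hence $T(\bm x^\ast)$ is an invariant subspace of $\mathbb{J}_{\bm x}^\ast$, the tangent--normal coupling is strictly one-directional, and the $\bm x$-block is exactly block-triangular with respect to $T\oplus N$; no perturbation estimate is needed. The paper exploits this by pairing right-eigenvectors $\bm v_j^\ast\in T(\bm x^\ast)$ in \cref{eqn:chisdjx1} with left-eigenvectors along $\nabla c_j(\bm x^\ast)$ in \cref{eqn:chisdjx2}, which settles the full spectrum at once. Second, in the ``only if'' direction the theorem assumes only $\|\bm v_i^\ast\|=1$, not orthonormality, and $\dot{\bm v}_i=\bm 0$ by itself only says $\operatorname{\widetilde Hess}E(\bm x^\ast)[\bm v_i^\ast]\in\operatorname{span}\{\bm v_1^\ast,\cdots,\bm v_i^\ast\}$; extracting the eigenrelation and the mutual orthogonality requires the induction \cref{eqn:vimui} that peels off the reflection operators one index at a time, a step your sketch compresses into ``reduces exactly to the eigenrelation.'' With these two repairs your outline matches the paper's proof.
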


\begin{proof}
We consider the Jacobian operator of the dynamics \eqref{eqn:chisdm},
\begin{equation}\label{eqn:chisdmjacobian}
  \mathbb{J}
  =\dfrac{\partial(\dot {\bm x},\dot {\bm v_1},\dot {\bm v_2}, \cdots,\dot {\bm v_k})}
         {\partial( {\bm x}, {\bm v_1}, {\bm v_2}, \cdots, {\bm v_k})}
  =\begin{pmatrix}
      \mathbb{J}_{\bm x} & \mathbb{J}_{\bm x 1} & \mathbb{J}_{\bm x 2} & \cdots & \mathbb{J}_{\bm x k} \\
      \star & \mathbb{J}_{1} & \mathbb{O} & \cdots & \mathbb{O} \\
      \star & \star & \mathbb{J}_{2} & \cdots & \mathbb{O} \\
      \vdots & \vdots & \vdots &  & \vdots \\
      \star & \star & \star & \cdots & \mathbb{J}_{k}\\
    \end{pmatrix},
\end{equation}
whose blocks have the following expressions,
\begin{align*}
    \mathbb{J}_{\bm x} =\dfrac{\partial\dot {\bm x}}{\partial\bm x}
    &=-\left(\mathbf{I}-\sum\limits_{i=1}^{k} 2\bm{v}_{i} \bm{v}_{i}^{\top}\right)
    \left(\mathbf{H}(\bm x)-\mathbf{A}(\bm x)\left(\mathbf{A}(\bm x)^\top \mathbf{A}(\bm x)\right)^{-1}
    \right. \\ &\left. \quad
    \left(\nabla^2 \bm c(\bm x)\operatorname{grad} E(\bm{x})\right)^\top \right)
    -\mu\sum_{l=1}^m \left(c_l(\bm x)\nabla^2 c_l(\bm x)+ \nabla c_l(\bm x) \nabla c_l(\bm x)^\top\right),
\\
    \mathbb{J}_{\bm x i}=\dfrac{\partial\dot {\bm x}}{\partial\bm v_i}
    &=2\bm v_i^\top \operatorname{grad }E(\bm{x}) \mathbf{I}+2\bm v_i \operatorname{grad }E(\bm{x})^\top,
\\
    \mathbb{J}_{i} =\dfrac{\partial\dot {\bm v}_i}{\partial\bm v_i}
    &=-\left(\mathbf{I}-\sum\limits_{j=1}^{i} 2\bm{v}_j\bm{v}_j^\top\right) \operatorname{\widetilde Hess}E(\bm x)
    +\left\langle \bm{v}_i, \operatorname{\widetilde Hess}E(\bm x)[\bm{v}_i]\right\rangle \mathbf{I}\\
    &\quad -\mathbf{A}(\bm x)\left(\mathbf{A}(\bm x)^\top \mathbf{A}(\bm x)\right)^{-1}
    \left(\nabla^2 \bm c(\bm x) \dot{\bm x}
    + \mathbb{J}_{\bm x i}^\top \nabla^2 \bm c(\bm x) \bm{v}_i \right)^\top.
\end{align*}
Here, $\mathbf{H}(\bm x)$ is an asymmetric extension of the Riemannian Hessian \eqref{eqn:riemannianhess} defined as
\begin{equation}\label{eqn:chisdhmatrix}
\mathbf{H}(\bm x)=\mathbf{P}_{T(\bm x)} \left(\nabla^2 E(\bm x)
  -\nabla^2 \bm c(\bm x)
  \left(\mathbf{A}(\bm x)^\top \mathbf{A}(\bm x)\right)^{-1}\mathbf{A}(\bm x)^\top \nabla E(\bm x)\right).
\end{equation}
In the following, $\mathbb{J}(\bm x^\ast,\bm v_1^\ast,\cdots,\bm v_k^\ast)$ is denoted as $\mathbb{J}^\ast$ and the blocks of $\mathbb{J}^\ast$ are denoted as $\mathbb{J}^\ast$ with corresponding subscripts.

"$\Leftarrow$":
Supposing that $\bm x^{\ast}$ is a $k$-saddle and $\operatorname{Hess}E(\bm x^\ast)[\bm v_i^\ast]=\lambda_i^\ast \bm v_i^\ast$ for $i=1,\cdots,k$, we have $\operatorname{grad}E(\bm x^*)=\bm 0$ and $\lambda_k^\ast <0 <\lambda_{k+1}^\ast$.
With simple calculations, $(\bm x^\ast,\bm v_1^\ast,\cdots,\bm v_k^\ast)$ is a steady state of \eqref{eqn:chisdm}.
Note that $\mathbb{J}_{\bm x i}$ is null if $\operatorname{grad}E(\bm x)=\bm 0$, so the Jacobian \eqref{eqn:chisdmjacobian} is block lower triangular, whose eigenvalues are determined by the diagonal blocks,
\begin{align*}
    \mathbb{J}_{\bm x}^\ast
    &=-\left(\mathbf{I}-\sum\limits_{i=1}^{k} 2\bm{v}_{i}^\ast  {\bm{v}_{i}^\ast}^{\top}\right)
    \mathbf{H}(\bm x^\ast)-\mu \sum_{l=1}^m \nabla c_l(\bm x^\ast) \nabla c_l(\bm x^\ast )^\top,
\\
    \mathbb{J}_{i}^\ast
    &=-\left(\mathbf{I}-\sum\limits_{j=1}^{i} 2\bm{v}_j^\ast {\bm{v}_j^\ast} ^\top\right) \operatorname{\widetilde Hess}E(\bm x^\ast )
    +\lambda_i^\ast\mathbf{I}.
\end{align*}

For $\operatorname{Hess}E(\bm x^\ast)$, we denote $\bm v_i^\ast$ $(i=k+1,\cdots,d-m)$ as the eigenvector of its eigenvalue $\lambda_i^\ast$ so that $\{\bm v_i^\ast\}_{i=1}^{d-m}$ is an orthonormal basis of $T(\bm x^\ast)$.
The equations
\begin{equation}\label{eqn:chisdjx1}
  \mathbb{J}_{\bm x}^\ast\bm{v}_{j}^\ast= -\left(\mathbf{I}-\sum\limits_{i=1}^{k} 2\bm{v}_{i}^\ast  {\bm{v}_{i}^\ast}^{\top}\right)\operatorname{Hess}E(\bm x^\ast)[\bm v_j^\ast]=
\left\{
\begin{aligned}
 \lambda_j^\ast\bm v_j^\ast,  &\quad 1\leqslant j\leqslant k,\\
 -\lambda_j^\ast\bm v_j^\ast, &\quad k<j\leqslant d-m,
\end{aligned}
\right.
\end{equation}
indicate that $\mathbb{J}_{\bm x}^\ast$ has eigenvalues of $\lambda_1^\ast, \cdots, \lambda_k^\ast$, $-\lambda_{k+1}^\ast, \cdots, -\lambda_{d-m}^\ast$, and the equations
\begin{equation}\label{eqn:chisdjx2}
  {\nabla c_j(\bm x^\ast)}^\top\mathbb{J}_{\bm x}^\ast=-\mu \|{\nabla c_j(\bm x^\ast)}\|^2{\nabla c_j(\bm x^\ast)}^\top,\quad 1\leqslant j\leqslant m.
\end{equation}
indicate that $\mathbb{J}_{\bm x}^\ast$ has $m$ eigenvalues of $-\mu \|{\nabla c_1(\bm x^\ast)}\|^2$, $\cdots$, $-\mu \|{\nabla c_m(\bm x^\ast)}\|^2$, which are different from the previous ones due to $\mu$.
Therefore, all the eigenvalues of $\mathbb{J}_{\bm x}^\ast$ are negative.
For $\mathbb{J}_{i}^\ast$, we have
\begin{equation}\label{eqn:chisdji1}
  \mathbb{J}_{i}^\ast\bm{v}_{j}^\ast=
\left\{
\begin{aligned}
 (\lambda_i^\ast+\lambda_j^\ast)\bm v_j^\ast,  &\quad 1\leqslant j\leqslant i,\\
 (\lambda_i^\ast-\lambda_j^\ast)\bm v_j^\ast,  &\quad i<j\leqslant d-m,
\end{aligned}
\right.
\end{equation}
indicating eigenvalues of $\lambda_i^\ast+\lambda_1^\ast$, $\cdots$, $\lambda_i^\ast+\lambda_i^\ast$, $\lambda_i^\ast-\lambda_{i+1}^\ast$, $\cdots$, $\lambda_i^\ast-\lambda_{d-m}^\ast$, and
\begin{equation}\label{eqn:chisdji2}
\mathbb{J}_{i}^\ast \nabla c_j(\bm x^\ast)=\lambda_i^\ast {\nabla c_j(\bm x^\ast)},\quad 1\leqslant j\leqslant m,
\end{equation}
indicating an eigenvalues $\lambda_i^\ast$ with multiplicity $m$.
Therefore, all the eigenvalues of $\mathbb{J}_{i}^\ast$ are negative, and the steady state $(\bm x^\ast,\bm v_1^\ast,\cdots,\bm v_k^\ast)$ is linearly stable.

"$\Rightarrow$":
Supposing that $(\bm x^\ast,\bm v_1^\ast,\cdots,\bm v_k^\ast)$ is a linearly stable steady state, we have $\dot{\bm x}=\bm 0$ and $\dot{\bm v}_i=\bm 0$, indicating
\begin{equation}\label{eqn:vidotequ0}
\left(\mathbf{I} - \sum\limits_{j=1}^{i-1} 2\bm{v}_j^\ast{\bm{v}_j^\ast}^\top\right) \operatorname{\widetilde Hess}E(\bm x^\ast)[\bm{v}_{i}^\ast]= \mu_i^\ast \bm{v}_i^\ast,
\end{equation}
where $\mu_i^\ast=\left\langle \operatorname{\widetilde Hess}E(\bm x^\ast)[\bm{v}_{i}^\ast], \bm{v}_i^\ast\right\rangle$.
We now show by induction that for $i=1,\cdots,k$,
\begin{equation}\label{eqn:vimui}
\operatorname{\widetilde Hess}E(\bm x^\ast)[\bm{v}_{i}^\ast]= \mu_i^\ast \bm{v}_i^\ast\neq \bm 0,\quad
\left\langle \bm{v}_j^\ast, \bm{v}_i^\ast \right\rangle=\delta_{ij}, \quad j=1,\cdots,i-1.
\end{equation}
The $i=1$ case is obtained from \eqref{eqn:vidotequ0} directly, and $\bm v_1^\ast\in T(\bm x^\ast)$ indicates $\mu_1\neq 0$ due to the nondegeneracy.
Assumed that \eqref{eqn:vimui} holds for $1\leqslant i< l$, by taking $i=l$ in \eqref{eqn:vidotequ0} we have,
\begin{equation}\label{eqn:viin1}
\left(\operatorname{\widetilde Hess}E(\bm x^\ast) - \sum\limits_{j=1}^{l-1} 2\mu_j^\ast\bm{v}_j^\ast{\bm{v}_j^\ast}^\top\right) \bm{v}_{l}^\ast= \mu_l^\ast \bm{v}_l^\ast,
\end{equation}
from the symmetry of $\operatorname{\widetilde Hess}E(\bm x^\ast)$.
Since $\bm{v}_1^\ast,\cdots,\bm{v}_{l-1}^\ast$ are eigenvectors of $\operatorname{\widetilde Hess}E(\bm x^\ast)$ according to the inductive assumption, $\operatorname{\widetilde Hess}E(\bm x^\ast)$ and $\operatorname{\widetilde Hess}E(\bm x^\ast) - \sum\limits_{j=1}^{l-1} 2\mu_j^\ast\bm{v}_j^\ast{\bm{v}_j^\ast}^\top$ share the same eigenvectors, so $\bm{v}_l^\ast\in T(\bm x^\ast)$ is also an eigenvector of $\operatorname{\widetilde Hess}E(\bm x^\ast)$ with an eigenvalue $\mu_l^\ast\neq 0$.
Furthermore, $\sum\limits_{j=1}^{l-1} \mu_j^\ast \langle\bm{v}_l^\ast,\bm{v}_{j}^\ast\rangle\bm{v}_j^\ast=\bm 0$ leads to $\langle\bm{v}_l^\ast,\bm{v}_{j}^\ast\rangle=0$, which completes the induction of \eqref{eqn:vimui}.
Consequently, from $\dot{\bm x}=\bm 0$ and $\bm x^\ast \in\mathcal{M}$, we have $\operatorname{grad} E(\bm{x^\ast})=\bm 0$ and the Jacobian \eqref{eqn:chisdmjacobian} is block lower triangular.

Finally, we show that the index of the stationary point $\bm x^\ast$ is $k$, and $\mu_i^\ast=\lambda_i^\ast$ for $i=1,\cdots,k$.
Since $\operatorname{Hess}E(\bm x^\ast)$ is a symmetric operator on $T(\bm x^\ast)$ with some eigenpairs $\{(\mu_i^\ast,\bm v_i^\ast)\}_{i=1}^k$, we denote $\bm v_i^\ast$ $(i=k+1,\cdots,d-m)$ as the eigenvector of its eigenvalue $\mu_i^\ast$ such that $\{\bm v_i^\ast\}_{i=1}^{d-m}$ is an orthonormal basis of $T(\bm x^\ast)$.
Similarly to \eqref{eqn:chisdjx1} and \eqref{eqn:chisdjx2}, the eigenvalues of $\mathbb{J}_{\bm x}^\ast$ are
\begin{equation}\label{eqn:jxeigenvalues}
\mu_1^\ast,\cdots,\mu_k^\ast;\quad
-\mu_{k+1}^\ast,\cdots,-\mu_{d-m}^\ast;\quad
-\mu \|{\nabla c_1(\bm x^\ast)}\|^2, \cdots, -\mu \|{\nabla c_m(\bm x^\ast)}\|^2;
\end{equation}
which have negative real parts from the linear stability.
Therefore, $\{\mu_i^\ast\}_{i=1}^{k}$ are negative and $\{\mu_i^\ast\}_{i=k+1}^{d-m}$ are positive, so $\bm x^\ast$ is a $k$-saddle.
Similarly to \eqref{eqn:chisdji1} and \eqref{eqn:chisdji2}, the eigenvalues of $\mathbb{J}_{i}^\ast$,
\begin{equation}\label{eqn:jieigenvalues}
\mu_i^\ast+\mu_1^\ast, \cdots, \mu_i^\ast+\mu_i^\ast;\quad
\mu_i^\ast-\mu_{i+1}^\ast, \cdots, \mu_i^\ast-\mu_{d-m}^\ast;\quad
\mu_i^\ast (\text{with multiplicity }m);
\end{equation}
have negative real parts as well, which indicates $\mu_i^\ast<\mu_{i+1}^\ast$ for $i=1,\cdots,k$.
Comparing the negative eigenvalues of $\operatorname{Hess}E(\bm x^\ast)$, we have $\mu_i^\ast=\lambda_i^\ast$, which completes our proof.
\end{proof}

\begin{remark}\label{rmk:identity}
The CHiSD \eqref{eqn:chisd} actually searches for $k$-saddles on the isosurface of $\bm c(\bm x)$, while the initial condition \eqref{eqn:chisdinitial} makes the dynamics search for saddle points on the manifold $\mathcal{M}$.
The additional term in \eqref{eqn:chisdm} is applied to pull the dynamics towards the manifold in case of perturbation in order to obtain linear stability.

From Remark \ref{rmk:consercation}, the dynamics \eqref{eqn:chisdm} and the dynamics \eqref{eqn:chisd} with the initial condition \eqref{eqn:chisdinitial} have the same orbits.
Therefore, a linearly stable steady state $(\bm x^\ast,\bm v_1^\ast,\cdots,\bm v_k^\ast)$ of \eqref{eqn:chisdm} is also asymptotically stable for \eqref{eqn:chisd} in the following sense.
As long as the initial condition \eqref{eqn:chisdinitial} is sufficiently close to $(\bm x^\ast,\bm v_1^\ast,\cdots,\bm v_k^\ast)$, the dynamics \eqref{eqn:chisd} will converge to this steady state as well.
\end{remark}

\section{Numerical implementation}
\label{sec:numericalimplementation}
\subsection{Retractions and vector transport}
\label{sec:retractionstransports}
The numerical scheme of CHiSD \eqref{eqn:chisd} is supposed to maintain the manifold property \eqref{eqn:manifoldproperty} in each iteration step.
Since a simple explicit Euler scheme can easily push $\bm x$ away from the manifold, we introduce the tools of retractions and vector transport in manifold optimization to discretize CHiSD \eqref{eqn:chisd}, and we refer to \cite{absil2008optimization} for more detailed information.

Strictly speaking, a tangent vector $\bm \xi_{\bm x}$ to the manifold $\mathcal{M}$ at a point $\bm x\in \mathcal{M}$ is a mapping from the set of germs of functions $\mathfrak{F}_{\bm x}(\mathcal{M})$ to $\mathbb{R}$.
Each tangent vector $\bm \xi_{\bm x}$ can be represented by a curve $\gamma$ on $\mathcal{M}$ with $\gamma(0)=\bm x$ satisfying $\bm \xi_{\bm x}f=\frac{\mathrm{d}}{\mathrm{d}t}f(\gamma(t))\big|_{t=0}$ for $\forall f \in \mathfrak{F}_{\bm x}(\mathcal{M})$.
The tangent space at $\bm x\in \mathcal{M}$, denoted by $T_{\bm x}\mathcal{M}$, can now be canonically identified with $T(\bm x)$ via the one-to-one correspondence $\bm \xi_{\bm x} \in T_{\bm x}\mathcal{M}\mapsto \gamma'(0)=\lim\limits_{\tau\to0}\frac{\gamma(\tau)-\gamma(0)}{\tau} \in T(\bm x)$.
Therefore, for $\bm x\in \mathcal{M}$, we treat the mapping $\bm \xi_{\bm x} \in T_{\bm x}\mathcal{M}$ and the tangent vector object $\bm \xi\in T(\bm x)$ at $\bm x$ equally throughout the article.
The tangent bundle $T\mathcal{M}:=\bigcup\limits_{\bm x\in\mathcal{M}}T_{\bm x}\mathcal{M}$ denotes the set of all tangent vectors to $\mathcal{M}$.

The notion of moving $\bm x$ on the manifold in the direction of a tangent vector is generalized by a retraction mapping.
A retraction $R$ is a smooth mapping from $T\mathcal{M}$ to $\mathcal{M}$, and the retraction of $R$ to $T_{\bm x}\mathcal{M}$,
denoted by $R_{\bm x}$, satisfies
$R_{\bm x}(\bm 0_{\bm x})={\bm x}$ and
$\frac{\mathrm{d}}{\mathrm{d}t} R_{\bm x}\left(t\bm \eta_{\bm x}\right)\big|_{t=0} =\bm \eta_{\bm x}$ for $\forall \bm \eta_{\bm x} \in T_{\bm x}\mathcal{M}$, where $\bm 0_{\bm x}$ is the zero element in $T_{\bm x}\mathcal{M}$.
As a natural retraction on the Riemannian manifold in the geometric sense, the exponential mapping at $\bm x \in \mathcal{M}$, denoted by $\operatorname{Exp}_{\bm x}$, maps $\bm\eta\in T_{\bm x}\mathcal{M}$ to $\gamma(1;\bm x,\bm \eta)\in \mathcal{M}$, where $\gamma(t;\bm x,\bm \eta)$ is the unique geodesic such that $\gamma(0)=\bm x$ and $\dot\gamma(0)=\bm \eta$.
For the unit sphere $S^{d-1}$, the exponential mapping has a computable form,
\begin{equation}\label{eqn:exponentialmappingsphere}
\operatorname{Exp}_{\bm x}\bm\eta= (\cos \|\bm\eta\|) \bm x+\dfrac{\sin\|\bm\eta\|}{\|\bm\eta\|}\bm\eta,
\end{equation}
while the exponential mapping of a general Riemannian manifold poses significant numerical challenges to calculating cheaply.
Therefore, other computable retractions are often applied as approximations of the exponential mapping.
It should be pointed out that for the same manifold, we can have different choices of retractions, which might lead to different computational results.
For the unit sphere $S^{d-1}$, another clear form of the retraction is
\begin{equation}\label{eqn:retractionsphere}
R_{\bm x}(\bm \eta)=\dfrac{\bm x+\bm \eta}{\|\bm x+\bm \eta\|}
=\operatorname{Exp}_{\bm x}\left(\dfrac{\arctan \|\bm\eta\|}{\|\bm\eta\|}\bm\eta\right).
\end{equation}

Next we consider enforcing the directions $\bm v_i$ on the tangent space.
The notion of vector transport $\mathcal{T}$ specifies how to transport a tangent vector $\bm \xi_{\bm x}\in T_{\bm x}\mathcal{M}$ as $\bm x$ moves on the Riemannian manifold.
A vector transport on a manifold $\mathcal{M}$ is a smooth mapping from the Whitney sum $T\mathcal{M}\oplus T\mathcal{M}:=
\{(\bm\eta_{\bm x}, \bm \xi_{\bm x}): \bm\eta_{\bm x}, \bm \xi_{\bm x}\in T_{\bm x}\mathcal{M}, \bm x\in\mathcal{M}\}$ to $T\mathcal{M}$ satisfying following properties:
\begin{enumerate}
  \item $\mathcal{T}_{\bm \eta_{\bm x}}\bm \xi_{\bm x}\in T_{R_{\bm x}(\bm \eta_{\bm x})}\mathcal{M}$, where $R$ is the retraction associated with $\mathcal{T}$;
  \item $\mathcal{T}_{\bm 0_{\bm x}}\bm \xi_{\bm x}=\bm \xi_{\bm x}$ for $\forall\bm \xi_{\bm x} \in T_{\bm x}\mathcal{M}$;
  \item $\mathcal{T}_{\bm \eta_{\bm x}}(a\bm \xi_{\bm x}+b\bm \zeta_{\bm x})=
  a\mathcal{T}_{\bm \eta_{\bm x}}\bm \xi_{\bm x} +b\mathcal{T}_{\bm \eta_{\bm x}}\bm \zeta_{\bm x}$
  for $\forall\bm \xi_{\bm x},\bm \zeta_{\bm x} \in T_{\bm x}\mathcal{M}$ and $\forall a,b\in \mathbb{R}$.
\end{enumerate}
The parallel translation \eqref{eqn:paralleltranport} is a natural vector transport along a curve $\bm x(t)$ on the manifold, where the parallel translation of orthonormal vectors remains orthonormal.
For the unit sphere $S^{d-1}$, as $\bm x \in S^{d-1}$ moves to $\operatorname{Exp}_{\bm x}\bm\eta$ along the geodesics, the parallel translation of a vector $\bm\xi\in T_{\bm x}\mathcal{M}$ has a closed form,
\begin{equation}\label{eqn:paralleltranslationsphere}
\mathcal{T}_{\bm \eta}\bm \xi=
\bm \xi +\dfrac{\cos\|\bm\eta\|-1}{\|\bm\eta\|^2}\langle\bm \eta, \bm \xi\rangle \bm\eta
-\dfrac{\sin\|\bm\eta\|}{\|\bm\eta\|}\langle\bm \eta, \bm \xi\rangle\bm x
\in T_{\operatorname{Exp}_{\bm x}\bm\eta}\mathcal{M},
\end{equation}
where the exponential mapping \eqref{eqn:exponentialmappingsphere} is the associated retraction.
Parallel translation is not the only way to achieve vector transport, and is often difficult to calculate numerically for general Riemannian manifolds as well.
Alternatively, there is considerable flexibility in how to choose the vector transport, and two typical approaches to generating computationally tractable vector transport according to the retraction operator $R$ are
\begin{equation}\label{eqn:vectortransport}
\mathcal{T}_{\bm \eta}\bm \xi=\frac{\mathrm{d}}{\mathrm{d}t} R_{\bm x}(\bm\eta+t\bm\xi)\bigg|_{t=0}, \text{ and }
\mathcal{T}_{\bm \eta}\bm \xi=\mathbf{P}_{T(R_{\bm x}(\eta))}\bm \xi.
\end{equation}
Note that vector transport generally may not maintain the orthonormality of vectors.
For the unit sphere $S^{d-1}$ with the retraction \eqref{eqn:retractionsphere}, the two approaches yield
\begin{equation}\label{eqn:vectortransportsphere}
\mathcal{T}_{\bm \eta}\bm \xi=\dfrac{1}{\|\bm x+\bm \eta\|}\bm \xi-\dfrac{\langle\bm\xi, \bm x+\bm \eta\rangle}{\|\bm x+\bm \eta\|^3}(\bm x+\bm \eta),  \text{ and }
\mathcal{T}_{\bm \eta}\bm \xi=\bm \xi-\dfrac{\langle\bm\xi, \bm x+\bm \eta\rangle}{\|\bm x+\bm \eta\|^2}(\bm x+\bm \eta).
\end{equation}

\subsection{Numerical algorithms of CHiSD}
\label{sec:numericalalgorithmCHiSD}
With the help of vector transport $\mathcal{T}$ and the retraction $R$ associated with $\mathcal{T}$, we are able to implement CHiSD \eqref{eqn:chisd} numerically with the initial condition \eqref{eqn:hisdinitial}.
Specifically, at the $(n+1)$-th iteration step, we aim to calculate $\bm x^{(n+1)}$ and $\bm v_i^{(n+1)}$ using $\bm x^{(n)}$ and $\bm v_i^{(n)}$ in the previous step.
The retraction $R_{\bm x}$ provides a practical way to pull the points from $\bm x+T(\bm x)$ back onto the manifold $\mathcal{M}$, so we can implement an explicit scheme with a retraction as
\begin{equation}\label{eqn:xiteration}
\bm x^{(n+1)}=R_{\bm x^{(n)}}\left(\alpha^{(n)} \bm g^{(n)}\right), \quad
\bm g^{(n)}=-\left(\mathbf{I}-\sum_{i=1}^{k} 2 \bm{v}_{i}^{(n)} {\bm{v}_{i}^{(n)}}^\top\right)
\operatorname{grad} E\left(\boldsymbol{x}^{(n)}\right),
\end{equation}
to calculate $\bm x^{(n+1)}$.
Note that $\bm v_1^{(n)},\cdots,\bm v_k^{(n)}$ are orthonormal vectors in $T(\bm x^{(n)})$ which approximate the corresponding eigenvectors of $\operatorname{Hess}E(\bm x^{(n)})$, and $\alpha^{(n)}$ is the step size to be determined.
Therefore, $\bm g^{(n)}$ lies in the tangent space $T(\bm x^{(n)})$, and this iteration scheme ensures that $\bm x^{(n)}$ always satisfies the equality constraints \eqref{eqn:constraints}.

The dynamics of $\bm v_i$ consists of two terms: the first term solves the eigenvalue problem \eqref{eqn:chisdoptviwidetilde} using gradient flow, while the second one corresponds to the parallel translation as discussed in Remark \ref{rmk:parallel}.
Accordingly, we deal with the two terms successively.
We first transport the previous vectors $\{\bm v_i^{(n)}\}$ to $\{\check{\bm v}_i^{(n)}\}$ in the tangent space $T(\bm x^{(n+1)})$ using the vector transport $\mathcal{T}_{\alpha^{(n)} \bm g^{(n)}}$ at $\bm x^{(n)}$, and then solve the eigenvalue problem \eqref{eqn:chisdoptviwidetilde}.
Although an exact solution to the eigenvalue problem works well, we recommend to find a rough solution to reduce the computational costs, and $\bm v_i$ will converge as $\bm x$ approaches a $k$-saddle.
Since $\bm x^{(n+1)}$ is close to $\bm x^{(n)}$, the transported vector $\check{\bm v}_i^{(n)}$ provides a good initial guess of the corresponding eigenvector, so we simply apply one-step gradient flow for each eigenvector.
Practically, the Riemannian Hessian $\operatorname{Hess}E(\bm x)$ is often expensive to calculate, so a dimer approximation $\bm u_i^{(n)}$ is applied to approximate $\operatorname{\widetilde Hess}E(\bm x^{(n+1)})[\check{\bm v}_i^{(n)}]$ with a small $l>0$ \cite{henkelman1999dimer}.
Finally, $\left\{\bm v_i^{(n+1)}\right\}$ is calculated from $\left\{\tilde{\bm v}_i^{(n+1)}\right\}$ using a normalized orthogonalization function $\operatorname{orth}(\cdot)$ which can be realized by a Gram--Schmidt procedure.
Consequently, $\{\bm v_i^{(n+1)}\}$ can be calculated with the following procedure in each iteration step,
\begin{equation}\label{eqn:viteration}
\left\{
\begin{aligned}
&\check{\bm v}_i^{(n)} =\mathcal{T}_{\alpha^{(n)} \bm g^{(n)}} \bm v_i^{(n)},\\
&\bm u_i^{(n)}=\mathbf{P}_{T(\bm x^{(n+1)})}\dfrac{\operatorname{grad}E(\bm x^{(n+1)}+l \check{\bm v}_i^{(n)}) - \operatorname{grad}E(\bm x^{(n+1)}-l \check{\bm v}_i^{(n)} )}{2l},\\
&\bm d_i^{(n)}=-\bm u_i^{(n)}+\left\langle \bm u_i^{(n)}, \check{\bm v}_i^{(n)}\right\rangle \check{\bm v}_i^{(n)}
+\sum\limits_{j=1}^{i-1} 2\left\langle \bm u_i^{(n)}, \check{\bm v}_j^{(n)}\right\rangle \check{\bm v}_j^{(n)},\\
&\tilde{\bm v}_i^{(n+1)}=\check{\bm v}_i^{(n)}+\beta^{(n)}\bm d_i^{(n)},\\
&\left[\bm v_1^{(n+1)},\cdots,\bm v_k^{(n+1)}\right]=\operatorname{orth}
\left(\left[\tilde{\bm v}_1^{(n+1)},\cdots,\tilde{\bm v}_k^{(n+1)}\right]\right).
\end{aligned}
\right.
\end{equation}
The iteration \eqref{eqn:xiteration}--\eqref{eqn:viteration} is terminated if $\|\operatorname{grad}E(\bm x^{(n)})\|$ is smaller than the tolerance.

\begin{remark}\label{rmk:lobpcg}
The above iteration \eqref{eqn:viteration} for $\bm v_i$ can be regarded as a one-step application of the power method for $\mathbf{I}-\beta^{(n)}\operatorname{\tilde Hess}E(\bm x^{(n+1)})$ on the tangent space $T(\bm x^{(n+1)})$.
\end{remark}

\subsection{Construction of the solution landscape}
\label{sec:solutionlandscape}
In this subsection, we introduce a systematic numerical procedure to construct the solution landscape of an energy functional subject to equality constraints.
This procedure consists of a downward search algorithm and an upward search algorithm, which is a generalization of the pathway map to constrained cases \cite{yin2020construction}.

First, we present a toy model to illustrate our basic idea.
Consider an energy functional constrained on $S^2$,
\begin{equation}\label{eqn:toy}
\begin{aligned}
  E(x_1, x_2, x_3) &= (x_1^2-1)^2 + x_2^2 + 2 x_3^2,\\
  \mathrm{s.t.} \quad c(x_1, x_2, x_3) &= x_1^2+x_2^2+x_3^2-1=0,
\end{aligned}
\end{equation}
whose energy landscape is shown in Fig.~\ref{fig:ill}(a).
Two minima $C_1(1,0,0)$ and $C_2(-1,0,0)$ are connected by two 1-saddles $B_1(0,1,0)$ and $B_2(0,-1,0)$.
We aim to find these minima and 1-saddles down from an index-2 critical point $A(0,0,1)$, and construct the solution landscape as shown in Fig.~\ref{fig:ill}(b), which depicts how lower-index saddle points are connected to the higher-index ones.
The arrows in Fig.~\ref{fig:ill}(b) are realized using downward search algorithms based on the CHiSD.
\begin{figure}[htbp]
\centering
\includegraphics[width=.8\linewidth]{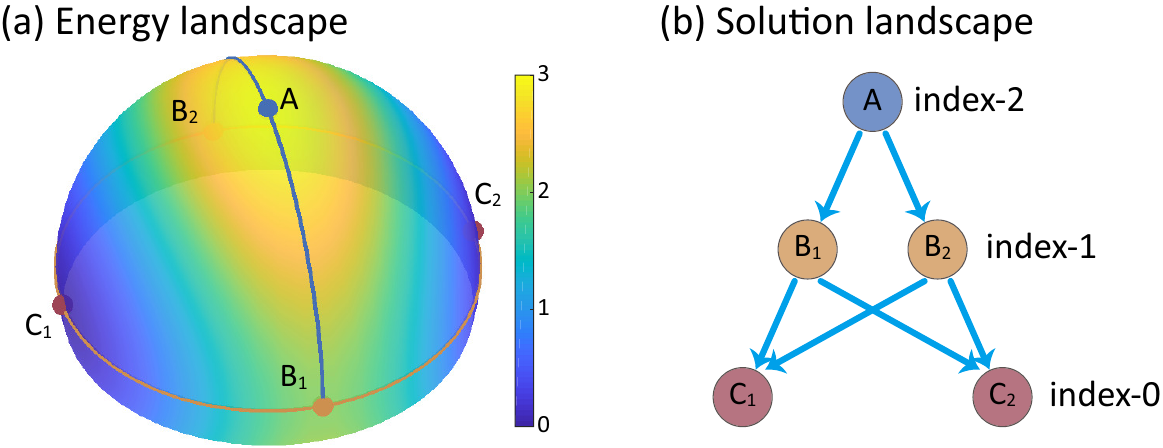}
\caption{Illustration of (a) an energy landscape of the function \eqref{eqn:toy} and (b) the solution landscape starting from the maximum, $A$, down to minima $C_1$ and $C_2$.
Two saddle points $B_1$ and $B_2$ are connected to $A$.
The indices labeled in (b) are those according to the Morse definition.}
\label{fig:ill}
\end{figure}

Starting from a parent state (high-index saddle point), the downward search is the core procedure to search for the stationary points with lower indices that are connected to this parent state.
Given a $k$-saddle $\hat{\bm x}\in\mathcal{M}$, we let $\hat{\bm v}_1,\cdots,\hat{\bm v}_k\in T(\hat{\bm x})$ denote the $k$ orthonormal eigenvectors of $\operatorname{Hess}E(\hat{\bm x})$ with negative eigenvalues $\hat{\lambda}_1 \leqslant \cdots \leqslant \hat{\lambda}_k<0$.
First we slightly perturb $\hat{\bm x}$ along an unstable direction $\hat{\bm v}_j$ chosen from the unstable directions $\{\hat{\bm v}_1,\cdots,\hat{\bm v}_k\}$ with $0<\varepsilon \ll 1$.
Then an $m$-CHiSD $(m<k)$ is started from the point $R_{\hat{\bm x}}\left(\pm \varepsilon \hat{\bm v}_j\right)$ and the $m$ initial directions $\left\{\bm v_i(0)\right\}_{i=1}^m$ are chosen from the unstable directions excluding $\hat{\bm v}_j$.
A typical choice of the initial condition is $\left(R_{\hat{\bm x}}\left( \pm\varepsilon \hat{\bm v}_{m+1}\right), \hat{\bm v}_1,\cdots, \hat{\bm v}_{m}\right)$.
Once a new $m$-saddle is found, we implement another downward search from this newly-found saddle recursively until minima are found.
The downward search algorithm is presented in detail as Algorithm~\ref{alg:downward}.
Note that each element $(\bm x,m,\{\bm v_1,\cdots,\bm v_k\})$ in the queue $\mathcal{A}$ represents to find $m$-saddles from the $k$-saddle $\bm x$.

\begin{algorithm}[htb]\caption{Downward search}\label{alg:downward}
\begin{algorithmic}[1]
\REQUIRE{A $\hat{k}$-saddle $\hat{\bm x}$, $\varepsilon>0$.}
\STATE{Calculate the $\hat{k}$ eigenvectors $\hat{\bm v}_1,\cdots,\hat{\bm v}_{\hat{k}}\in T(\hat{\bm x})$ of $\operatorname{Hess}E(\hat{\bm x})$;}
\STATE{Set the queue $\mathcal{A}= \{(\hat{\bm x},\hat{k}-1,\{\hat{\bm v}_1,\cdots,\hat{\bm v}_{\hat{k}}\})\}$,
the solution set $\mathcal{S}=\{\hat{\bm x}\}$, and the relation set $\mathcal{R}=\varnothing$;}
\WHILE{$\mathcal{A}$ is not empty}
  \STATE{Pop $(\bm x,m,\{\bm v_1,\cdots,\bm v_k\})$ from $\mathcal{A}$;}
  \STATE{Push $(\bm x,m-1,\{\bm v_1,\cdots,\bm v_k\})$ into $\mathcal{A}$ if $m\geqslant1$;}
  \FOR{$j=1:k$}
   \STATE{Determine the initial directions: $\{\bm v_i: i=1, \cdots, m+1, i\neq \min(j,m+1)\}$;}
    \IF{$m$-CHiSD from $R_{\bm x}\left(\pm \varepsilon \bm v_j\right)$ converges to $(\widetilde{\bm x}, \widetilde{\bm v}_1, \cdots, \widetilde{\bm v}_m)$}
      \STATE{$\mathcal{R}\leftarrow\mathcal{R} \cup \{(\bm x,\widetilde{\bm x})\}$;}
      \IF{$\widetilde{\bm x} \notin \mathcal{S}$}
        \STATE{$\mathcal{S}\leftarrow\mathcal{S} \cup \{\widetilde{\bm x}\}$;}
        \STATE{Push $(\widetilde{\bm x},m-1,\{\widetilde{\bm v}_1,\cdots,\widetilde{\bm v}_{m}\})$ into $\mathcal{A}$ if $m \geqslant 1$;}
      \ENDIF
    \ENDIF
  \ENDFOR
\ENDWHILE
\ENSURE{The solution set $\mathcal{S}$ and the relation set $\mathcal{R}$.}
\end{algorithmic}
\end{algorithm}

With the downward search, we can systematically search for multiple stationary points from a given parent state in a controlled procedure.
However, if the parent state is unknown, or in the case that multiple parent states exist, we need a numerical procedure to find a parent state.
Therefore, an upward search algorithm is required to find a higher-index saddle from a low-index stationary point (usually a minimum).
Fortunately, the CHiSD also embeds a mechanism to search upward.
Given a $k$-saddle $\hat{\bm x}\in\mathcal{M}$, more eigenvectors of $\operatorname{Hess}E(\hat{\bm x})$ with smallest eigenvalues are calculated as $\hat{\bm v}_1,\cdots,\hat{\bm v}_K\in T(\bm x)$, where $K$ $(K>k)$ is the highest index of the saddle point to search for.
Different from the downward search, we choose a direction $\hat{\bm v}_j$ from the stable directions $\{\hat{\bm v}_{k+1},\cdots,\hat{\bm v}_{K}\}$, and slightly perturb $\hat{\bm x}$ along $\hat{\bm v}_j$.
An $m$-CHiSD $(m>k)$ procedure is then started from $R_{\hat{\bm x}}\left(\pm \varepsilon \hat{\bm v}_j\right)$, while the $m$ initial directions $\{\bm v_i(0)\}_{i=1}^m$ need to include $\hat{\bm v}_j$.
Then this procedure is implemented to newly-found saddles recursively until $K$-saddles are found or no higher-index saddles can be found.
A typical choice of the initial condition is $(R_{\hat{\bm x}}\left(\pm \varepsilon \hat{\bm v}_m\right), \hat{\bm v}_1,\cdots, \hat{\bm v}_m)$.
Besides the difference in initial conditions, the upward search only aims to find parent states of the solution landscape, while the downward search presents the relations between stationary points and gives a complete picture of the solution landscape by exhaustively searching for multiple stationary points.
Algorithm~\ref{alg:upward} presents the upward search algorithm in detail with the typical choice.

\begin{algorithm}[htb]\caption{Upward search}\label{alg:upward}
\begin{algorithmic}[1]
\REQUIRE{A $\hat{k}$-saddle $\hat{\bm x}$, $\varepsilon>0$, the highest index $K$.}
\STATE{Calculate the $\hat{k}$ eigenvectors $\hat{\bm v}_1,\cdots,\hat{\bm v}_{\hat{K}}\in T(\hat{\bm x})$ of $\operatorname{Hess}E(\hat{\bm x})$;}
\STATE{Set the stack $\mathcal{A}= \{(\hat{\bm x},\hat{k}+1,\{\hat{\bm v}_1,\cdots,\hat{\bm v}_{K}\})\}$ and the solution set $\mathcal{S}=\{\hat{\bm x}\}$,}
\WHILE{$\mathcal{A}$ is not empty}
  \STATE{Pop $(\bm x,m,\{\bm v_1,\cdots,\bm v_K\})$ from $\mathcal{A}$;}
  \STATE{Push $(\bm x,m+1,\{\bm v_1,\cdots,\bm v_K\})$ into $\mathcal{A}$ if $m<K$;}
  \IF{$m$-CHiSD from $(R_{\bm x}(\pm \varepsilon \bm v_m), \bm v_1,\cdots,\bm v_m)$ converges to $(\widetilde{\bm x},\widetilde{\bm v}_1,\cdots,\widetilde{\bm v}_{m})$}
    \IF{$\widetilde{\bm x} \notin \mathcal{S}$}
      \STATE{$\mathcal{S}\leftarrow\mathcal{S} \cup \{\widetilde{\bm x}\}$ and calculate more eigenvectors $\widetilde{\bm v}_{m+1},\cdots,\widetilde{\bm v}_{\widetilde{K}}$ of $\operatorname{Hess}E(\widetilde{\bm x})$;}
      \STATE{Push $(\widetilde{\bm x},m+1,\{\widetilde{\bm v}_1,\cdots,\widetilde{\bm v}_K\})$ into $\mathcal{A}$ if $m<K$;}
    \ENDIF
  \ENDIF
\ENDWHILE
\ENSURE{The solution set $\mathcal{S}$.}
\end{algorithmic}
\end{algorithm}

The computation cost of downward and upward search mainly depends on the CHiSD method, since each process can implement the CHiSD search individually, the algorithm can be naturally parallelized to speed it up.
In practice, the solution landscape is achieved by a combination of downward search and upward search, and we can navigate up and down systematically on the energy landscape to construct a complete solution landscape.

\section{Numerical examples}
\label{sec:numericalexamples}
\subsection{Thomson problem}
\label{sec:thomson}
The Thomson problem considers the minimal-energy configuration of $N$ classical charged particles confined to a sphere which interact with each other via a Coulomb potential $f(r)=r^{-1}$, and was originally proposed as a representation of the atomic structure \cite{thomson1904structure}.
The Thomson problem has attached much attention as a special case of the 7th problem in Steven Smale’s eighteen problems for the 21st century \cite{smale1998mathematical}.
Many numerical attempts have been made to find local and global minima \cite{altschuler1994method,erber1991equilibrium}, transition states \cite{mehta2016kinetic,zhang2012shrinking} and high-index saddle points \cite{mehta2015exploring} to fully understand the energy landscape of the Thomson problem.

In the Thomson problem, the coordinate of the $i$-th particle $\bm x_i=(x_i, y_i, z_i)\in \mathbb{R}^3$ is constrained on the unit sphere $S^2$, and the energy function of a configuration $\left(\bm{x}_1, \cdots, \bm{x}_N\right) \in\mathbb{R}^{3\times N}$ is,
\begin{equation}\label{eqn:thomson}
E(\bm{x}_1, \cdots, \bm{x}_N)=\sum_{i<j}f\left(\left\|\bm{x}_{i}-\bm{x}_{j}\right\|\right).
\end{equation}
Because of the rotation symmetry, we specify the first particle to be the north pole and the second particle to lie on the $yz$-plane \cite{zhang2012shrinking}.
In other words, the state variables are constrained on a manifold,
\begin{equation}\label{eqn:thomsonlinear}
\mathcal{M} = \left\{(\bm{x}_1, \cdots, \bm{x}_N)\in \mathbb{R}^{3\times N}:
\bm x_1 = (0,0,1),x_2 = 0,\|\bm x_i\|_2=1 \right\},
\end{equation}
so that the Riemannian Hessian at a stationary point has no zero eigenvalues generally.
For $\bm x = (\bm{x}_1, \cdots, \bm{x}_N) \in \mathcal{M}$ and
$\bm \eta = \left(\bm 0, (0,\bm \eta_2),\bm \eta_3, \cdots, \bm \eta_N\right)\in T_{\bm x}\mathcal{M}$ where $\bm \eta_2 \in T_{(y_2, z_2)}S^1$ and $\bm\eta_i\in T_{\bm x_i}S^2$ for $i \geqslant 3$, the retraction operator is,
\begin{equation}\label{eqn:thomsonretraction}
  R_{\bm x}\bm \eta = \left(\bm x_1, (x_2, R^{S^1}_{(y_2, z_2)}\bm \eta_2),
  R^{S^2}_{\bm x_3}\bm \eta_3,\cdots,R^{S^2}_{\bm x_N}\bm\eta_N\right),
\end{equation}
where $R^{S^{d-1}}$ is the retraction operator on the unit sphere $S^{d-1}$ chosen as the exponential mapping \eqref{eqn:exponentialmappingsphere}.
For $\bm \xi = \left(\bm 0, (0,\bm \xi_2),\bm \xi_3, \cdots, \bm \xi_N\right)\in T_{\bm x}\mathcal{M}$, the vector transport from $\bm x$ to $R_{\bm x}\bm \eta$ is
\begin{equation}\label{eqn:thomsonvectortransport}
  \mathcal{T}_{\bm \eta}\bm \xi = \left(\bm 0, (0, \mathcal{T}^{S^1}_{\bm \eta_2}\bm \xi_2),
  \mathcal{T}^{S^2}_{\bm \eta_3}\bm \xi_3,\cdots,
  \mathcal{T}^{S^2}_{\bm \eta_N}\bm \xi_N\right),
\end{equation}
where $\mathcal{T}^{S^{d-1}}$ is the vector transport for the unit sphere $S^{d-1}$ with the form of \eqref{eqn:paralleltranslationsphere}.
Note that if $N$ is large, the constraint amount $m$ will be about as large as $N$.
Nevertheless, the computational costs of the retraction operator \eqref{eqn:thomsonretraction} and the vector transport \eqref{eqn:thomsonvectortransport} will be $\mathcal{O}(N)$ because of the product structure of \eqref{eqn:thomsonlinear}.

\begin{figure}[htbp]
  \centering
  \includegraphics[width=0.8\linewidth]{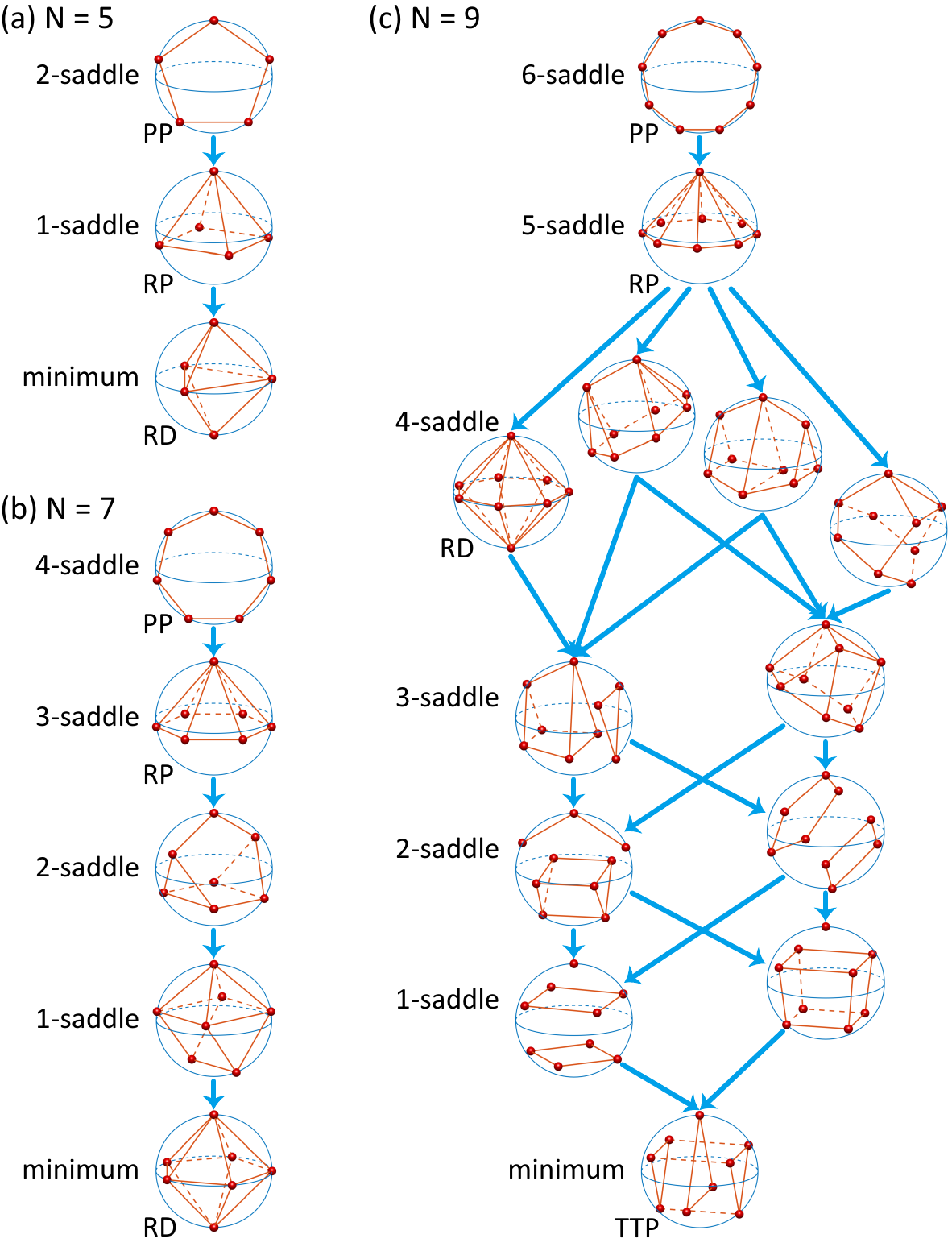}
  \caption{The solution landscape of the Thomson problem with particle numbers (a) $N=5$. (b) $7$, and (c) $9$.
  The configurations of stationary points are presented in the solution landscape.
  Each red ball represents a particle on the sphere, and the red line segments are drawn to show the relative positions.
  The height of each configuration approximately corresponds to its relative energy.
  The index of each stationary point is labelled on the left side, and some important stationary points are further labelled with their configurations.
  Each arrow from a higher-index stationary point to a lower-index stationary point corresponds to a CHiSD pathway by the downward search.
  }\label{fig:thomson}
\end{figure}

We aim to present multiple stationary points of the Thomson problem in order to offer a full description for the energy landscape without random initial guesses.
A trivial stationary point of the Thomson problem is a planar polygon configuration (PP) that $N$ particles are evenly spaced on a great circle of $S^2$, and is an $(N-3)$-saddle for $N\geqslant 3$.
This trivial stationary point presents a natural parent state for downward search which is implemented with fixed step sizes $\alpha^{(n)}=10^{-4}$ and $\beta^{(n)}=10^{-3}$.
In this low-dimensional example, we adopt the canonical inner product and the Hessian is evaluated exactly.
The computational results for some small particle numbers $N=5,7$ and $9$ are presented in Fig.~\ref{fig:thomson}, where the stationary points with the same configuration are only shown once.
From the trivial $(N-3)$-saddle, an $(N-4)$-saddle of a regular pyramid configuration (RP) is first located during the downward search, where $(N-1)$ particles evenly spaced on a latitude ring.
The regular dipyramid configuration (RD) with two antipodal particles and $(N-2)$ other particles evenly spaced on the equator is the minimal-energy configuration for $N=5,7$, and a 4-saddle configuration for $N=9$, while the minimal-energy configuration for $N=9$ is a triaugmented triangular prism (TTP).

\subsection{Bose--Einstein condensation}
\label{sec:bec}
The experimental realization of BEC in vapors of magnetically-trapped alkali atoms that occupy a single quantum state has attracted great interest in the atomic physics community \cite{anderson1995observation,davis1995bose},
The properties of BEC at an ultra-low temperature are well described by the macroscopic wave function $\psi(\bm x,t)$, whose evolution is governed by the nonlinear Schr\"{o}dinger equation (Gross--Pitaevskii equation),
\begin{equation}\label{eqn:gpe}
\mathrm{i}\dfrac{\partial}{\partial t}\psi(\bm x,t)=\left(-\dfrac12 \nabla^2+V(\bm x)+ \beta|\psi(\bm x,t)|^2\right) \psi(\bm x,t),\quad \bm x\in \mathbb{R}^{d},
\end{equation}
where $V(\bm x)$ is a real-valued trapping potential on $\mathbb{R}^{d}$ and $\beta$ is the dimensionless interaction coefficient \cite{bao2013mathematical}.
Two important invariants of \eqref{eqn:gpe} are the normalization of the wave function,
\begin{equation}\label{eqn:gpeconstraint}
\int_{\mathbb{R}^{d}} |\psi(\bm x,t)|^2 \mathrm{d}\bm x=1,
\end{equation}
and the energy per particle,
\begin{equation}\label{eqn:gpeenergy}
E(\psi(\cdot,t))=\int_{\mathbb{R}^{d}} \left[\dfrac12 |\nabla \psi(\bm x,t)|^2+V(\bm x)| \psi(\bm x,t)|^2+ \dfrac\beta2 |\psi(\bm x,t)|^4\right]\mathrm{d}\bm x.
\end{equation}
To find a stationary solution to \eqref{eqn:gpe}, substituting $\psi(\bm x,t)=\mathrm{e}^{-\mathrm{i}\mu t}\phi(\bm x)$ into \eqref{eqn:gpe} gives
\begin{equation}\label{eqn:gpephi}
\mu\;\phi(\bm x)=\left(-\dfrac12 \nabla^2+V(\bm x)+ \beta|\phi(\bm x)|^2\right) \phi(\bm x),\quad \bm x\in \mathbb{R}^{d},
\end{equation}
where $\mu$ is the chemical potential and $\phi(\bm x)$ satisfies a unit complex sphere constraint,
\begin{equation}\label{eqn:becsphereconstraint}
\phi\in \mathcal{M} = \left\{\varphi\in L^2(\mathbb{R}^{d}, \mathbb{C}):
E(\varphi)<\infty,
\int_{\mathbb{R}^{d}} |\varphi(\bm x)|^2 \mathrm{d}\bm x=1\right\}.
\end{equation}
The eigenfunction to the nonlinear elliptic eigenvalue problem \eqref{eqn:gpephi} is the stationary point of the energy
\begin{equation}\label{eqn:becenergy}
E(\phi)=\int_{\mathbb{R}^{d}} \left[\dfrac12 |\nabla \phi(\bm x)|^2+V(\bm x)|\phi(\bm x)|^2+ \dfrac\beta2|\phi(\bm x)|^4\right]\mathrm{d}\bm x.
\end{equation}
with the sphere constraint \eqref{eqn:becsphereconstraint}, and the chemical potential $\mu$ can be calculated as,
\begin{equation}\label{eqn:becchemicalpotential}
\mu=\int_{\mathbb{R}^{d}} \left[\dfrac12 |\nabla \phi(\bm x)|^2+V(\bm x)| \phi(\bm x)|^2+ \beta |\phi(\bm x)|^4\right]\mathrm{d}\bm x = E(\phi) + \int_{\mathbb{R}^{d}}\dfrac\beta2 |\phi(\bm x)|^4\mathrm{d}\bm x.
\end{equation}
The ground state of BEC is defined as the eigenfunction of \eqref{eqn:gpephi} subject to the sphere constraint \eqref{eqn:becsphereconstraint} with the lowest energy \eqref{eqn:becenergy}.
Any eigenfunction of \eqref{eqn:gpephi} subject to the sphere constraint \eqref{eqn:becsphereconstraint} with a higher energy is usually called excited states in the physics literature.

We consider a two-dimensional BEC system with a repulsive interaction parameter $\beta=300$, where the condensates are tightly confined in the other dimension.
The potential $V(\bm x):\mathbb{R}^2\to\mathbb{R}$ is simply chosen as a radial-symmetric harmonic oscillator $V(\bm{x})=\frac{1}{2}|\bm{x}|^2$.
The existence, uniqueness up to a phase factor, and smoothness of the ground state have been well studied in previous researches \cite{bao2013mathematical,lieb2001bosons}, and there are many efficient numerical methods for computing the ground state \cite{bao2013mathematical,bao2006efficient,wu2017regularized}.
Furthermore, there have been some attempts to find several excited states of BEC as well.
By choosing proper initial guesses such as odd functions, a few methods for finding ground states can also be applied to compute some excited states with certain symmetry of BEC \cite{bao2006efficient,wu2017regularized}.
Some excited states of BEC have also been obtained with Newton's methods \cite{law2010stable,law2014dynamic} and deflated continuation algorithms \cite{charalampidis2018computing,boulle2020deflation,charalampidis2020bifurcation}, while how to systematically compute the excited states of BEC remains a huge challenge.

The energy \eqref{eqn:becenergy} implicates some invariance, which leads to zero eigenvalues of Hessians at stationary points.
For a stationary solution $\phi\in\mathcal{M}$, both $\mathrm{e}^{\mathrm{i}\vartheta}\phi(\bm x)$ and $\phi\left(R_\vartheta\bm x\right)$ are also stationary solutions with the same index and energy for $\forall \vartheta\in\mathbb{R}$, where $R_\vartheta=\begin{pmatrix}  \cos \vartheta & -\sin \vartheta \\  \sin \vartheta & \cos \vartheta \\\end{pmatrix}$ is the rotation around the origin.
This invariance accounts for two zero eigenvalues for Hessians of general stationary solutions.
As a special case, the ground state and central vortex states can be expressed as $\phi(\bm x)=\mathrm{e}^{\mathrm{i}m\theta}\varphi_m(r)$ using the polar coordinate $(r,\theta)$ and the winding number $m\in\mathbb{Z}$ of the central vortex, so the Hessians at these stationary solutions have only one zero eigenvalue \cite{bao2013mathematical,bao2005ground}.

In the numerical computation, the wave function $\phi$ is truncated into a bounded domain $D=[-M,M]^2$ with homogeneous Dirichlet boundary conditions,
\begin{equation}\label{eqn:becboundary}
\begin{aligned}
E(\phi)&=\int_{D} \left[\dfrac12 |\nabla \phi(\bm x)|^2+V(\bm x)|\phi(\bm x)|^2+ \dfrac\beta2 |\phi(\bm x)|^4\right]\mathrm{d}\bm x,\\
\phi(\bm x) &= 0,\quad \bm x\in \partial D,
\end{aligned}
\end{equation}
because the stationary states decay to zero exponentially fast in the far field from the effect of the trapping potential \cite{bao2013mathematical}.
We discretize the wave function $\phi(\bm x)\in L^2(D)$ with $M=8$ using finite difference methods with $N=128$ nodes along each dimension.
It should be noted that the complex-valued function space $L^2(D)$ is a real Hilbert space with the real inner product,
\begin{equation}\label{eqn:becinnerproduct}
  \langle\phi,\psi\rangle = \int_{D} \dfrac{\phi(\bm x)\bar{\psi}(\bm x)+ \bar{\phi}(\bm x)\psi(\bm x)}{2}\mathrm{d}\bm x,
\end{equation}
and the gradient of \eqref{eqn:gpeenergy}, which is often referred to as the Wirtinger derivative, is
\begin{equation}\label{eqn:becwirtinger}
  \nabla E(\phi) = 2\dfrac{\delta E}{\delta \bar{\phi}}=-\nabla^2 \phi+2V(\bm x)\phi + 2\beta \phi|\phi|^2.
\end{equation}
The retraction operator and the corresponding vector transport for this unit sphere are chosen as \eqref{eqn:exponentialmappingsphere} and \eqref{eqn:paralleltranslationsphere}.

\begin{figure}[htbp]
\centering
\includegraphics[width=\linewidth]{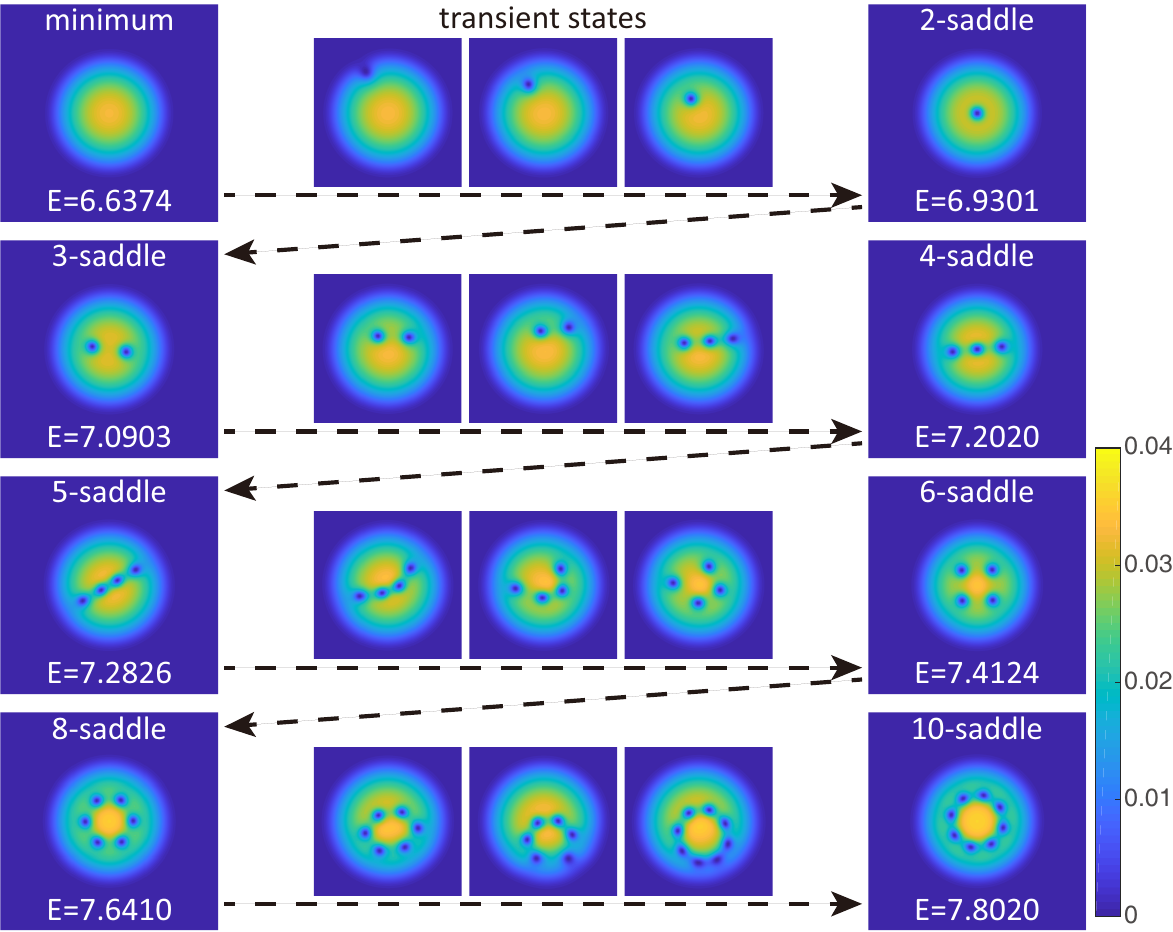}
\caption{An upward pathway sequence from the ground state to a 10-saddle of BEC.
Each dashed arrow represents an upward search to an excited state, and three transient states on some upward dynamical pathways are presented.
We show the probability density $|\phi|^2$ for each state, and present the energy of each stationary solution.}
\label{fig:bec}
\end{figure}

From the ground state, we implement the upward search with fixed step sizes $\alpha^{(n)}=10^{-6}$ and $\beta^{(n)}=10^{-3}$ to search for excited states, which is slightly different from Algorithm~\ref{alg:upward} because of the zero eigenvalues of Hessians at stationary solutions.
For each stationary solution $\phi_i$, we let $\eta_{i,1},\cdots,\eta_{i,m} \in T(\phi_i)$ denote $m$ orthonormal eigenvectors of $\operatorname{Hess}E(\hat{\bm x})$ corresponding to the smallest $m$ eigenvalues $\lambda_{i,1} \leqslant \cdots \leqslant \lambda_{i,m}$.
Since the Hessian at the ground state $\phi_0$ has only one zero eigenvalue $\lambda_{0,1} =0$, the 1-CHiSD from the ground state cannot climb out of the basin.
Alternatively, we treat this degenerate minimum $\phi_0$ as a 1-saddle by regarding the zero eigenvector $\eta_{i,1}$ as an unstable direction, and a 2-CHiSD from $\left(R_{\phi_0}(\varepsilon \eta_{0,2}), \eta_{0,1}, \eta_{0,2}\right)$ finally converges to a 2-saddle $\phi_2$.
This 2-CHiSD upward search involves a quantized vortex of winding number $+1$ moving from the domain edge to the center, and the 2-saddle $\phi_2$ is a central vortex state with only one zero eigenvalue as shown in Fig.~\ref{fig:bec}.
Here, the jump of Morse index comes from the degeneracy of this BEC system, that is, zero eigenvalues of the Riemannian Hessians at the critical points.
Similarly, we implement upward search from $\phi_2$ with a 4-CHiSD from $\left(R_{\phi_2}(\varepsilon \eta_{2,4}), \eta_{2,1}, \cdots, \eta_{2,4}\right)$ and find a degenerate 3-saddle $\phi_3$.
The 3-saddle $\phi_3$ has two vortices with winding numbers $+1$ and $-1$ respectively, and its Hessian has two zero eigenvalues $\lambda_{3,4}=\lambda_{3,5}=0$.
Therefore, the upward search from $\phi_3$ is implemented with a 6-CHiSD from $\left(R_{\phi_3}(\varepsilon \eta_{3,6}), \eta_{3,1}, \cdots, \eta_{3,6}\right)$ to find an excited state with a higher index, which turns out to be a 4-saddle.
This 6-CHiSD upward search involves a new quantized vortex, and the three vortices of the 4-saddle are aligned.
This upward search procedure can be repeated to newly-found excited states, and Fig.~\ref{fig:bec} shows a sequence of seven upward-search attempts from the ground state eventually to a 10-saddle.
It should be noted that the upward search $(k+3)$-CHiSD from a $k$-saddle finally converges to a $(k+1)$-saddle for $k=3,4,5$, and a $(k+2)$-saddle for $k=6,8$.

\section{Conclusions and discussions}
\label{sec:conclusion}
In this article, we proposed a CHiSD method for searching for index-$k$ saddle points subject to general equality constraints.
Applying the Riemannian gradients and Hessians, we derived the dynamical system with a transformed gradient flow as the formulation of CHiSD.
The linear stability of CHiSD at the index-$k$ constrained saddle points is proved.
In the numerical implementation of CHiSD, the retraction operator and vector transport are introduced to discretize the dynamics.
Combined with the CHiSD method, one can construct the solution landscape on a constrained manifold using downward/upward search algorithms.

We presented two numerical examples as applications of the CHiSD method.
For the Thomson problem, we constructed the solution landscape from a planar configuration. Our results can be regarded as an improvement of the saddle point amounts and the kinetic transition networks \cite{mehta2016kinetic,mehta2015exploring}.
Although only cases with $5$, $7$ and $9$ particles are considered here, our method can be applied to study more particles in a straightforward way.
The other application is to search for the excited states of BEC, which correspond to a nonlinear eigenvalue problem.
The upward search algorithm is slightly varied in this case due to the zero eigenvalues of Hessians at stationary points.
We calculated the excited states using the upward search algorithm, which can further applied to the 10-saddle, as it is conjectured that the nonlinear eigenvalue problem \eqref{eqn:gpephi} admits infinitely many eigenfunctions which are linearly independent \cite{bao2005ground}.
Furthermore, in a rotating BEC system, vortices are energetically favored above a critical rotational frequency and this system can admit multiple stable/metastable states \cite{bao2013mathematical,bao2005ground}.
We will systematically explore the solution landscape of rotating BEC and identify probable mechanisms of excitation in a subsequent work.

There are naturally some issues worthy of further investigations in order to improve numerical efficiency.
Since the computational costs mainly depend on calculations of the eigenvectors of Hessians, one may consider more efficient methods for computing the eigenvectors.
For instance, if the problem is ill-conditioned, the power iteration method \eqref{eqn:viteration} may present poor approximations of eigenvectors, leading to a failure of convergence.
Thus, other efficient numerical algorithms will be needed to deal with such difficulties.
Moreover, the fixed step size is used in current version of the CHiSD method for simplicity, which can be adopted by adaptive step sizes to accelerate the convergent rate.
Furthermore, at the end of CHiSD iterations, we can apply the second-order methods to accelerate the convergence.
Another important task is to apply the CHiSD method to explore more practical applications, in which the framework of CHiSD will be adapted to the numerical schemes originally designed for the gradient dynamics.

\begin{acknowledgements}
We would like to thank Professors Qiang Du, Weizhu Bao, and Yongyong Cai for helpful discussions.
J. Yin acknowledges the support from the Elite Program of Computational and Applied Mathematics for Ph.D. Candidates in Peking University.
Z. Huang is supported by the Elite Undergraduate Training Program of School of Mathematical Sciences in Peking University.
\end{acknowledgements}

\section*{Conflict of interest}
The authors declare that they have no conflict of interest.

\bibliographystyle{spmpsci}
\bibliography{chisd}

\begin{thebibliography}{10}
\providecommand{\url}[1]{{#1}}
\providecommand{\urlprefix}{URL }
\expandafter\ifx\csname urlstyle\endcsname\relax
  \providecommand{\doi}[1]{DOI~\discretionary{}{}{}#1}\else
  \providecommand{\doi}{DOI~\discretionary{}{}{}\begingroup
  \urlstyle{rm}\Url}\fi

\bibitem{absil2008optimization}
Absil, P.A., Mahony, R., Sepulchre, R.: Optimization Algorithms on Matrix
  Manifolds.
\newblock Princeton University Press, Princeton, NJ (2008)

\bibitem{altschuler1994method}
Altschuler, E.L., Williams, T.J., Ratner, E.R., Dowla, F., Wooten, F.: Method
  of constrained global optimization.
\newblock Phys. Rev. Lett. \textbf{72}(17), 2671--2674 (1994).
\newblock \doi{10.1103/PhysRevLett.72.2671}

\bibitem{anderson1995observation}
Anderson, M.H., Ensher, J.R., Matthews, M.R., Wieman, C.E., Cornell, E.A.:
  Observation of {B}ose--{E}instein condensation in a dilute atomic vapor.
\newblock Science \textbf{269}(5221), 198--201 (1995).
\newblock \doi{10.1126/science.269.5221.198}

\bibitem{bao2013mathematical}
Bao, W., Cai, Y.: Mathematical theory and numerical methods for
  {B}ose--{E}instein condensation.
\newblock Kinet. Relat. Models \textbf{6}(1), 1--135 (2013).
\newblock \doi{10.3934/krm.2013.6.1}

\bibitem{bao2006efficient}
Bao, W., Chern, I.L., Lim, F.Y.: Efficient and spectrally accurate numerical
  methods for computing ground and first excited states in {B}ose--{E}instein
  condensates.
\newblock J. Comput. Phys. \textbf{219}(2), 836--854 (2006).
\newblock \doi{10.1016/j.jcp.2006.04.019}

\bibitem{bao2005ground}
Bao, W., Wang, H., Markowich, P.A.: Ground, symmetric and central vortex states
  in rotating {B}ose--{E}instein condensates.
\newblock Comm. Math. Sci. \textbf{3}(1), 57--88 (2005).
\newblock \urlprefix\url{https://projecteuclid.org/euclid.cms/1111095641}

\bibitem{bessarab2015method}
Bessarab, P.F., Uzdin, V.M., J{\'o}nsson, H.: Method for finding mechanism and
  activation energy of magnetic transitions, applied to skyrmion and antivortex
  annihilation.
\newblock Comput. Phys. Commun. \textbf{196}, 335--347 (2015).
\newblock \doi{10.1016/j.cpc.2015.07.001}

\bibitem{boulle2020deflation}
Boull\'{e}, N., Charalampidis, E.G., Farrell, P.E., Kevrekidis, P.G.:
  Deflation-based identification of nonlinear excitations of the
  three-dimensional {G}ross-{P}itaevskii equation.
\newblock Phys. Rev. A \textbf{102}, 053307 (2020).
\newblock \doi{10.1103/PhysRevA.102.053307}

\bibitem{cances2009some}
Canc{\`e}s, E., Legoll, F., Marinica, M.C., Minoukadeh, K., Willaime, F.: Some
  improvements of the activation-relaxation technique method for finding
  transition pathways on potential energy surfaces.
\newblock J. Chem. Phys. \textbf{130}(11), 114711 (2009).
\newblock \doi{10.1063/1.3088532}

\bibitem{charalampidis2020bifurcation}
Charalampidis, E.G., Boull{\'e}, N., Farrell, P.E., Kevrekidis, P.G.:
  Bifurcation analysis of stationary solutions of two-dimensional coupled
  {G}ross--{P}itaevskii equations using deflated continuation.
\newblock Commun. Nonlinear Sci. Numer. Simul. \textbf{87}, 105255 (2020).
\newblock \doi{10.1016/j.cnsns.2020.105255}

\bibitem{charalampidis2018computing}
Charalampidis, E.G., Kevrekidis, P.G., Farrell, P.E.: Computing stationary
  solutions of the two-dimensional {G}ross--{P}itaevskii equation with deflated
  continuation.
\newblock Commun. Nonlinear Sci. Numer. Simul. \textbf{54}, 482--499 (2018).
\newblock \doi{10.1016/j.cnsns.2017.05.024}

\bibitem{cheng2010nucleation}
Cheng, X., Lin, L., E, W., Zhang, P., Shi, A.C.: Nucleation of ordered phases
  in block copolymers.
\newblock Phys. Rev. Lett. \textbf{104}(14), 148301 (2010).
\newblock \doi{10.1103/PhysRevLett.104.148301}

\bibitem{davis1995bose}
Davis, K.B., Mewes, M.O., Andrews, M.R., van Druten, N.J., Durfee, D.S., Kurn,
  D.M., Ketterle, W.: Bose--{E}instein condensation in a gas of sodium atoms.
\newblock Phys. Rev. Lett. \textbf{75}(22), 3969--3973 (1995).
\newblock \doi{10.1103/PhysRevLett.75.3969}

\bibitem{du2009constrained}
Du, Q., Zhang, L.: A constrained string method and its numerical analysis.
\newblock Commun. Math. Sci. \textbf{7}(4), 1039--1051 (2009).
\newblock \doi{10.4310/CMS.2009.v7.n4.a11}

\bibitem{weinan2002string}
E, W., Ren, W., Vanden-Eijnden, E.: String method for the study of rare events.
\newblock Phys. Rev. B \textbf{66}(5), 052301 (2002).
\newblock \doi{10.1103/PhysRevB.66.052301}

\bibitem{weinan2010transition}
E, W., Vanden-Eijnden, E.: Transition-path theory and path-finding algorithms
  for the study of rare events.
\newblock Annu. Rev. Phys. Chem. \textbf{61}(1), 391--420 (2010).
\newblock \doi{10.1146/annurev.physchem.040808.090412}

\bibitem{weinan2011gentlest}
E, W., Zhou, X.: The gentlest ascent dynamics.
\newblock Nonlinearity \textbf{24}(6), 1831--1842 (2011).
\newblock \doi{10.1088/0951-7715/24/6/008}

\bibitem{erber1991equilibrium}
Erber, T., Hockney, G.M.: Equilibrium configurations of {$N$} equal charges on
  a sphere.
\newblock J. Phys. A: Math. Gen. \textbf{24}(23), L1369--L1377 (1991).
\newblock \doi{10.1088/0305-4470/24/23/008}

\bibitem{frank1958liquid}
Frank, F.C.: I. {L}iquid crystals. {O}n the theory of liquid crystals.
\newblock Discuss. Faraday Soc. \textbf{25}, 19--28 (1958).
\newblock \doi{10.1039/DF9582500019}

\bibitem{han2019transition}
Han, Y.C., Hu, Y.C., Zhang, P.W., Zhang, L.: Transition pathways between defect
  patterns in confined nematic liquid crystals.
\newblock J. Comput. Phys. \textbf{396}, 1--11 (2019).
\newblock \doi{10.1016/j.jcp.2019.06.028}

\bibitem{henkelman1999dimer}
Henkelman, G., J{\'o}nsson, H.: A dimer method for finding saddle points on
  high dimensional potential surfaces using only first derivatives.
\newblock J. Chem. Phys. \textbf{111}(15), 7010--7022 (1999).
\newblock \doi{10.1063/1.480097}

\bibitem{henkelman2000improved}
Henkelman, G., J{\'o}nsson, H.: Improved tangent estimate in the nudged elastic
  band method for finding minimum energy paths and saddle points.
\newblock J. Chem. Phys. \textbf{113}(22), 9978--9985 (2000).
\newblock \doi{10.1063/1.1323224}

\bibitem{law2010stable}
Law, K.J.H., Kevrekidis, P.G., Tuckerman, L.S.: Stable vortex--bright-soliton
  structures in two-component {B}ose--{E}instein condensates.
\newblock Phys. Rev. Lett. \textbf{105}, 160405 (2010).
\newblock \doi{10.1103/PhysRevLett.105.160405}

\bibitem{law2014dynamic}
Law, K.J.H., Neely, T.W., Kevrekidis, P.G., Anderson, B.P., Bradley, A.S.,
  Carretero-Gonz\'{a}lez, R.: Dynamic and energetic stabilization of persistent
  currents in {B}ose--{E}instein condensates.
\newblock Phys. Rev. A \textbf{89}, 053606 (2014).
\newblock \doi{10.1103/PhysRevA.89.053606}

\bibitem{leeson2000protein}
Leeson, D.T., Gai, F., Rodriguez, H.M., Gregoret, L.M., Dyer, R.B.: Protein
  folding and unfolding on a complex energy landscape.
\newblock Proc. Natl. Acad. Sci. U.S.A. \textbf{97}(6), 2527--2532 (2000).
\newblock \doi{doi.org/10.1073/pnas.040580397}

\bibitem{li2001minimax}
Li, Y., Zhou, J.: A minimax method for finding multiple critical points and its
  applications to semilinear {PDE}s.
\newblock SIAM J. Sci. Comput. \textbf{23}(3), 840--865 (2001).
\newblock \doi{10.1137/S1064827599365641}

\bibitem{li2019local1}
Li, Z., Ji, B., Zhou, J.: A local minimax method using virtual geometric
  objects: {P}art {I}---for finding saddles.
\newblock J. Sci. Comput. \textbf{78}(1), 202--225 (2019).
\newblock \doi{10.1007/s10915-018-0774-y}

\bibitem{li2019local2}
Li, Z., Zhou, J.: A local minimax method using virtual geometric objects:
  {P}art {II}---for finding equality constrained saddles.
\newblock J. Sci. Comput. \textbf{78}(1), 226--245 (2019).
\newblock \doi{10.1007/s10915-018-0775-x}

\bibitem{lieb2001bosons}
Lieb, E.H., Seiringer, R., Yngvason, J.: Bosons in a trap: A rigorous
  derivation of the {G}ross-{P}itaevskii energy functional.
\newblock Phys. Rev. A \textbf{61}, 043602 (2000).
\newblock \doi{10.1103/PhysRevA.61.043602}

\bibitem{lin2019numerical}
Lin, L., Lu, J., Ying, L.: Numerical methods for {K}ohn--{S}ham density
  functional theory.
\newblock Acta Numerica \textbf{28}, 405--539 (2019).
\newblock \doi{10.1017/S0962492919000047}

\bibitem{machado2011optimized}
Machado-Charry, E., B{\'e}land, L.K., Caliste, D., Genovese, L., Deutsch, T.,
  Mousseau, N., Pochet, P.: Optimized energy landscape exploration using the
  {\em{ab initio}} based activation-relaxation technique.
\newblock J. Chem. Phys. \textbf{135}(3), 034102 (2011).
\newblock \doi{10.1063/1.3609924}

\bibitem{mallamace2016energy}
Mallamace, F., Corsaro, C., Mallamace, D., Vasi, S., Vasi, C., Baglioni, P.,
  Buldyrev, S.V., Chen, S.H., Stanley, H.E.: Energy landscape in protein
  folding and unfolding.
\newblock Proc. Natl. Acad. Sci. U.S.A. \textbf{113}(12), 3159--3163 (2016).
\newblock \doi{10.1073/pnas.1524864113}

\bibitem{mehta2016kinetic}
Mehta, D., Chen, J., Chen, D.Z., Kusumaatmaja, H., Wales, D.J.: Kinetic
  transition networks for the {T}homson problem and {S}male’s seventh
  problem.
\newblock Phys. Rev. Lett. \textbf{117}(2), 028301 (2016).
\newblock \doi{10.1103/PhysRevLett.117.028301}

\bibitem{mehta2015exploring}
Mehta, D., Chen, T., Morgan, J.W.R., Wales, D.J.: Exploring the potential
  energy landscape of the {T}homson problem via {N}ewton homotopies.
\newblock J. Chem. Phys. \textbf{142}(19), 194113 (2015).
\newblock \doi{10.1063/1.4921163}

\bibitem{meng2010free}
Meng, G., Arkus, N., Brenner, M.P., Manoharan, V.N.: The free-energy landscape
  of clusters of attractive hard spheres.
\newblock Science \textbf{327}(5965), 560--563 (2010).
\newblock \doi{10.1126/science.1181263}

\bibitem{milnor1963morse}
Milnor, J.: Morse Theory.
\newblock Princeton University Press, Princeton, NJ (1963)

\bibitem{muller2018duplication}
M{\"u}ller, G.P., Bessarab, P.F., Vlasov, S.M., Lux, F., Kiselev, N.S.,
  Bl{\"u}gel, S., Uzdin, V.M., J{\'o}nsson, H.: Duplication, collapse, and
  escape of magnetic skyrmions revealed using a systematic saddle point search
  method.
\newblock Phys. Rev. Lett. \textbf{121}(19), 197202 (2018).
\newblock \doi{10.1103/PhysRevLett.121.197202}

\bibitem{nocedal2006numerical}
Nocedal, J., Wright, S.J.: Numerical Optimization.
\newblock Springer, New York (2006).
\newblock \doi{10.1007/978-0-387-40065-5}

\bibitem{olsen2004comparison}
Olsen, R.A., Kroes, G.J., Henkelman, G., Arnaldsson, A., J{\'o}nsson, H.:
  Comparison of methods for finding saddle points without knowledge of the
  final states.
\newblock J. Chem. Phys. \textbf{121}(20), 9776--9792 (2004).
\newblock \doi{10.1063/1.1809574}

\bibitem{quapp2014locating}
Quapp, W., Bofill, J.M.: Locating saddle points of any index on potential
  energy surfaces by the generalized gentlest ascent dynamics.
\newblock Theor. Chem. Acc. \textbf{133}(8), 1510 (2014).
\newblock \doi{10.1007/s00214-014-1510-9}

\bibitem{smale1998mathematical}
Smale, S.: Mathematical problems for the next century.
\newblock Math. Intell. \textbf{20}(2), 7--15 (1998).
\newblock \doi{10.1007/BF03025291}

\bibitem{thomson1904structure}
Thomson, J.J.: {XXIV}. {O}n the structure of the atom: an investigation of the
  stability and periods of oscillation of a number of corpuscles arranged at
  equal intervals around the circumference of a circle; with application of the
  results to the theory of atomic structure.
\newblock London, Edinburgh, Dublin Phil. Mag. J. Sci. \textbf{7}(39), 237--265
  (1904).
\newblock \doi{10.1080/14786440409463107}

\bibitem{loring2011introduction}
Tu, L.W.: An Introduction to Manifolds.
\newblock Springer, New York (2011).
\newblock \doi{https://doi.org/10.1007/978-1-4419-7400-6}

\bibitem{wales2003energy}
Wales, D.J.: Energy Landscapes.
\newblock Cambridge University Press, Cambridge, England (2003)

\bibitem{wang2021modeling}
Wang, W., Zhang, L., Zhang, P.: Modelling and computation of liquid crystals.
\newblock Acta Numerica \textbf{30}, 765--851 (2021).
\newblock \doi{10.1017/S0962492921000088}

\bibitem{wu2017regularized}
Wu, X., Wen, Z., Bao, W.: A regularized {N}ewton method for computing ground
  states of {B}ose--{E}instein condensates.
\newblock J. Sci. Comput. \textbf{73}, 303--329 (2017).
\newblock \doi{10.1007/s10915-017-0412-0}

\bibitem{yin2020construction}
Yin, J., Wang, Y., Chen, J.Z.Y., Zhang, P., Zhang, L.: Construction of a
  pathway map on a complicated energy landscape.
\newblock Phys. Rev. Lett. \textbf{124}(9), 090601 (2020).
\newblock \doi{10.1103/PhysRevLett.124.090601}

\bibitem{yin2020searching}
Yin, J., Yu, B., Zhang, L.: Searching the solution landscape by generalized
  high-index saddle dynamics.
\newblock Sci. China Math. \textbf{64}, 1801--1816 (2021).
\newblock \doi{10.1007/s11425-020-1737-1}

\bibitem{yin2019high}
Yin, J., Zhang, L., Zhang, P.: High-index optimization-based shrinking dimer
  method for finding high-index saddle points.
\newblock SIAM J. Sci. Comput. \textbf{41}(6), A3576--A3595 (2019).
\newblock \doi{10.1137/19M1253356}

\bibitem{zhang2012constrained}
Zhang, J., Du, Q.: Constrained shrinking dimer dynamics for saddle point search
  with constraints.
\newblock J. Comput. Phys. \textbf{231}(14), 4745--4758 (2012).
\newblock \doi{10.1016/j.jcp.2012.03.006}

\bibitem{zhang2012shrinking}
Zhang, J., Du, Q.: Shrinking dimer dynamics and its applications to saddle
  point search.
\newblock SIAM J. Numer. Anal. \textbf{50}(4), 1899--1921 (2012).
\newblock \doi{10.1137/110843149}

\bibitem{zhang2008mathematical}
Zhang, L., Chen, L.Q., Du, Q.: Mathematical and numerical aspects of a
  phase-field approach to critical nuclei morphology in solids.
\newblock J. Sci. Comput. \textbf{37}(1), 89--102 (2008).
\newblock \doi{10.1007/s10915-008-9207-7}

\bibitem{zhang2016optimization}
Zhang, L., Du, Q., Zheng, Z.: Optimization-based shrinking dimer method for
  finding transition states.
\newblock SIAM J. Sci. Comput. \textbf{38}(1), A528--A544 (2016).
\newblock \doi{10.1137/140972676}

\bibitem{zhang2016recent}
Zhang, L., Ren, W., Samanta, A., Du, Q.: Recent developments in computational
  modelling of nucleation in phase transformations.
\newblock NPJ Comput. Mater. \textbf{2}(1), 1--9 (2016).
\newblock \doi{10.1038/npjcompumats.2016.3}

\end{thebibliography}
\end{document}